\definecolor{darkblue}{rgb}{0.0,0.0,0.3}
\theoremstyle{plain}
\newtheorem{thm}{Theorem}[section]
\newtheorem{cor}[thm]{Corollary}
\newtheorem{prop}[thm]{Proposition}
\theoremstyle{definition}
\newtheorem{defn}[thm]{Definition}
\newtheorem{example}[thm]{Example}
\newtheorem{rem}[thm]{Remark}
\numberwithin{equation}{section}
\newcommand{\bR}{{\mathbb{R}}}
\renewcommand{\S}{{\mathcal{S}}}
\newcommand{\ex}{\operatorname{ex}}
\newcommand{\ca}{\mathrm{C}^*}
\newcommand{\fb}{\partial_F G}
\newcommand{\aut}{\operatorname{Aut}}
\newcommand{\cl}{\operatorname{cl}}
\begin{document}

%%%%%%%%%%%%%%%%%%%%%%%%%%%%%%%%%%%%%%%%%%
\title{An intrinsic characterization of C*-simplicity}

\author[M. Kennedy]{Matthew Kennedy}
\address{Department of Pure Mathematics\\ University of Waterloo\\
Waterloo, Ontario \; N2L 3G1 \\Canada}
\email{matt.kennedy@uwaterloo.ca}

\begin{abstract}
A group is said to be C*-simple if its reduced C*-algebra is simple. We establish an intrinsic (group-theoretic) characterization of groups with this property. Specifically, we prove that a discrete group is C*-simple if and only if it has no non-trivial amenable uniformly recurrent subgroups. We further prove that a group is C*-simple if and only if it satisfies an averaging property considered by Powers.
\end{abstract}

\subjclass[2010]{Primary 46L35; Secondary 20F65, 37A20, 43A07}
\keywords{discrete group, reduced C*-algebra, C*-simplicity}
\thanks{Author supported by NSERC Grant Number 418585.}

\maketitle

%%%%%%%%%%%%%%%%%%%%%%%%%%%%%%%%%%%%%%%%%%

\section{Introduction}
A group is said to be C*-simple if its reduced C*-algebra is simple, meaning that it has no non-trivial proper closed two-sided ideals. It has been an open problem for some time to determine an intrinsic group-theoretic characterization of groups that are C*-simple, along the lines of Murray and von Neumann's characterization of groups that give rise to factorial von Neumann algebras as groups with the infinite conjugacy class property.

It is not difficult to see that a C*-simple group necessarily has no non-trivial normal amenable subgroups, and based on a great deal of accumulated evidence, it was thought that this condition might be sufficient. However, in a major breakthrough, Le Boudec \cite{L2015} constructed examples showing that this is not the case.

The main result in this paper is an intrinsic group-theoretic characterization of C*-simplicity in terms of the nonexistence of non-trivial amenable subgroups satisfying a condition that is weaker than normality. We say that a subgroup $H$ of a discrete group $G$ is {\em residually normal} if there exists a finite subset $F \subset G \setminus \{e\}$ such that $F \cap gHg^{-1} \ne \emptyset$ for all $g \in G$.

\begin{thm} \label{thm:main}
A discrete group is C*-simple if and only if it has no amenable residually normal subgroups.
\end{thm}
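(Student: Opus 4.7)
My plan is to recast the theorem in the language of uniformly recurrent subgroups (URS) of Glasner--Weiss and then appeal to the Furstenberg-boundary characterization of C*-simplicity due to Kalantar--Kennedy. Recall that $\operatorname{Sub}(G)$, the space of subgroups of $G$ with the Chabauty topology, carries a conjugation action of $G$, and a URS is a minimal closed $G$-invariant subset $Y \subset \operatorname{Sub}(G)$.

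The first step is to show that $G$ has a non-trivial amenable recurrent subgroup if and only if $G$ has a non-trivial amenable URS. Given an amenable recurrent subgroup $H$ with witnessing finite set $F \subset G\setminus\{e\}$, the Chabauty closure of the conjugacy class of $H$ consists of subgroups meeting $F$, hence non-trivial; since amenability is closed in the Chabauty topology (an invariant mean on $K$ is obtained as a weak-$*$ cluster point of invariant means along a Chabauty-convergent net), any minimal invariant subset of this closure is a non-trivial amenable URS. Conversely, compactness of a non-trivial amenable URS $Y$ produces a single finite $F \subset G \setminus \{e\}$ meeting every $K \in Y$, so each $K \in Y$ is amenable and recurrent. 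Under this translation the theorem becomes: $G$ is C*-simple if and only if $G$ has no non-trivial amenable URS.

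The direction ``$G$ not C*-simple implies existence of a non-trivial amenable URS'' follows from established machinery. By Kalantar--Kennedy, failure of C*-simplicity is equivalent to failure of topological freeness of the canonical action $G \acts \fb$ on the Furstenberg boundary, and in that case the stabilizer URS of $\fb$ is non-trivial. By Breuillard--Kalantar--Kennedy--Ozawa each point stabilizer in $\fb$ is amenable, so this URS is both amenable and non-trivial, which is what is needed.

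The main obstacle is the converse: a non-trivial amenable URS $Y$ must prevent C*-simplicity. My strategy is to use the $G$-injectivity of $\rC(\fb)$ (Hamana) together with amenability of each $K \in Y$. Amenability provides, for each $K \in Y$, a $K$-invariant probability measure on $\fb$; strong proximality of $\fb$ forces any such measure to be a point mass, so each $K \in Y$ fixes a point $x_K \in \fb$. To promote this pointwise selection into a $G$-equivariant continuous map $Y \to \fb$, I would use injectivity to construct a $G$-equivariant unital completely positive map $\rC(\fb) \to \rC(Y)$ whose fibre over $K$ is a $K$-invariant state on $\rC(\fb)$; rigidity of the Furstenberg boundary then forces this map to be multiplicative and to come from a continuous equivariant point map $K \mapsto x_K$, and non-triviality of some $K \in Y$ yields a non-trivial stabilizer in $\fb$. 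The delicate point is exactly this equivariant assembly: amenability is being applied fibrewise in $K$ while global $G$-equivariance must be imposed on the outcome. Hamana's theorem, together with the universality of $\fb$ as the largest strongly proximal minimal $G$-boundary, is the key leverage, and I expect this to be where the core work of the proof lies.
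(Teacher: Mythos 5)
Your reduction of the theorem to the URS statement (recurrent amenable subgroup exists iff a non-trivial amenable URS exists, via compactness and the neighborhood basis of $\{e\}$ in the Chabauty topology) is correct and is exactly how the paper passes between Theorems \ref{thm:main} and \ref{thm:c-star-simple-iff-no-urs-beginning}, and your treatment of the direction ``not C*-simple $\Rightarrow$ non-trivial amenable URS'' via the stabilizer URS of $\fb$ also matches the paper. The gap is in the hard direction. Your key claim that ``strong proximality of $\fb$ forces any $K$-invariant measure to be a point mass'' is false: strong proximality only says that point masses lie in the weak* closure of the \emph{$G$-orbit} of a measure; a measure invariant under a subgroup $K$ need not be a point mass, and an individual $K$ in the URS need not fix any point of $\fb$. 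Consequently the map $K \mapsto x_K$ you want to assemble need not exist even pointwise, and the subsequent appeal to ``rigidity'' to force a $G$-equivariant unital completely positive map $C(\fb) \to C(Y)$ to be multiplicative is unjustified: such maps correspond to $G$-maps $Y \to P(\fb)$, and there is no general principle forcing these to land in the point masses (the rigidity you have in mind holds for maps from $C(\fb)$ to itself, not into $C(Y)$). You flag this assembly as the delicate point, but it is precisely the content of the theorem in this direction, so as it stands the proof is incomplete.

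There are two known ways to close it. The paper's own argument avoids $\fb$ entirely in this direction: since $H$ is amenable, the indicator function $\chi_H$ extends to a state $\phi$ on $\ca_\lambda(G)$; taking the finite set $F \subset G \setminus \{e\}$ witnessing recurrence and $a = \sum_{s \in F} \lambda_s$, one has $(g\phi)(a) = |F \cap gHg^{-1}| \geq 1$ for all $g$, while $\tau_\lambda(a) = 0$; hence the weak* closed convex hull of $G\phi$ misses $\tau_\lambda$, contradicting the characterization (Corollary \ref{cor:closed-convex-hull-contains-trace}, via Theorem \ref{thm:c-star-simple-iff-no-non-trivial-G-boundaries}) of C*-simplicity in terms of boundaries in the state space. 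Alternatively, your dynamical idea can be repaired as in the paper's Remark \ref{rem:c-star-simple-iff-no-amenable-urs}-adjacent argument of Caprace and Le Boudec: you do not need every $K$ in the URS to fix a point. Take an $H$-invariant measure $\mu \in P(\fb)$, use the boundary property to find $g_i$ with $g_i\mu \to \delta_x$, pass to a subnet so that $g_i H g_i^{-1}$ converges in the Chabauty topology to some $K$; since $\{e\}$ is not in the (non-trivial, minimal) URS, $K \ne \{e\}$, and any non-trivial $k \in K$ eventually lies in $g_i H g_i^{-1}$, so $k g_i \mu = g_i \mu$ and in the limit $kx = x$. Thus $G$ does not act freely on $\fb$ and is not C*-simple by \cite{KK2014}. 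Either route replaces the false ``pointwise fixed point'' step with an argument that only uses a limit of conjugates along a single net.
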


We prove this result by analyzing the dynamics of the conjugation action of a group on its space of subgroups and invoking the dynamical characterization of C*-simplicity from \cite{KK2014}. The study of this action underlies the theory of uniformly recurrent subgroups introduced by Glasner and Weiss \cite{GW2015}. Using their terminology, Theorem \ref{thm:main} is equivalent to the following result (see Section \ref{sec:uniformly-recurrent-subgroups}). 

\begin{thm} \label{thm:c-star-simple-iff-no-urs-beginning}
A discrete group is C*-simple if and only if it has no non-trivial amenable uniformly recurrent subgroups.
\end{thm}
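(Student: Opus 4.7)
The plan is to invoke the dynamical characterization from \cite{KK2014}: $G$ is C*-simple if and only if the canonical action $G \acts \fb$ on the Furstenberg boundary is topologically free, meaning that the stabilizer of every point in some dense $G_\delta$ subset is trivial. I would translate this into the URS language as follows. For a minimal $G$-action on a compact space $X$, the stabilizer map $x \mapsto G_x$ from $X$ into $\mathrm{Sub}(G)$ is Borel, its continuity points form a dense $G_\delta$ subset, and the closure in $\mathrm{Sub}(G)$ of the stabilizers at continuity points is a URS, which I call $\mathrm{Stab}(X)$; the action is topologically free exactly when $\mathrm{Stab}(X) = \{\{e\}\}$. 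Theorem \ref{thm:c-star-simple-iff-no-urs-beginning} therefore reduces to the equivalence between the triviality of $\mathrm{Stab}(\fb)$ and the absence of non-trivial amenable URSs in $G$.

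The direction ``no non-trivial amenable URS $\Rightarrow$ C*-simple'' rests on a key lemma: every stabilizer of a point in $\fb$ is amenable, and hence $\mathrm{Stab}(\fb)$ is amenable. I would prove this by exploiting the $G$-injectivity of $C(\fb)$ established in \cite{KK2014}. For any subgroup $H \leq G$, restriction yields $H$-injectivity of $C(\fb)$, and Hamana's injective-envelope machinery then produces an $H$-equivariant unital completely positive embedding $\iota \colon C(\partial_F H) \hookrightarrow C(\fb)$. If $H$ fixes $x \in \fb$, then $\mathrm{ev}_x$ is an $H$-invariant state on $C(\fb)$, so $\mathrm{ev}_x \circ \iota$ is an $H$-invariant probability measure on $\partial_F H$. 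Since $\partial_F H$ is strongly proximal as an $H$-boundary, any such invariant measure is forced to be a point mass at an $H$-fixed point, so $\partial_F H$ is trivial and $H$ is amenable by Furstenberg's characterization. Consequently $\mathrm{Stab}(\fb)$ is amenable, and if $G$ has no non-trivial amenable URS then $\mathrm{Stab}(\fb)$ must be trivial, so $G \acts \fb$ is topologically free and $G$ is C*-simple.

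For the reverse direction, suppose $\H$ is a non-trivial amenable URS; I must produce a non-trivial stabilizer on $\fb$. Pick $H \in \H$. By amenability of $H$, there is an $H$-invariant probability measure $\nu$ on $\fb$. Strong proximality of $G \acts \fb$ furnishes a net $(g_\alpha)$ in $G$ with $(g_\alpha)_\ast \nu \to \delta_x$ weak-$\ast$ for some $x \in \fb$. After passing to a subnet, compactness of $\mathrm{Sub}(G)$ ensures that the conjugates $g_\alpha H g_\alpha^{-1}$ converge in the Chabauty topology to a subgroup $K$; by closedness and $G$-invariance of $\H$, we have $K \in \H$. For each $k \in K$, eventually $k = g_\alpha h_\alpha g_\alpha^{-1}$ with $h_\alpha \in H$, so by $H$-invariance of $\nu$, $k \cdot (g_\alpha)_\ast \nu = (g_\alpha)_\ast (h_\alpha)_\ast \nu = (g_\alpha)_\ast \nu$; passing to the limit yields $k \cdot x = x$. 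Thus $K$ fixes $x$, and since $\H$ is non-trivial $K \neq \{e\}$, contradicting topological freeness of $G \acts \fb$ and hence C*-simplicity.

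I expect the key lemma to be the chief technical obstacle: it rests on carefully setting up Hamana's $G$-injective envelope framework and producing the equivariant embedding $C(\partial_F H) \hookrightarrow C(\fb)$, as well as on the identification of $C(\fb)$ as the $G$-injective envelope of $\bC$. The reverse-direction argument is comparatively elementary once the Chabauty-compactness of $\mathrm{Sub}(G)$ is deployed alongside strong proximality of $\fb$.
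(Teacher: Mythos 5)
Your proposal is essentially correct, but the two implications are handled differently from the paper's main proof, so a comparison is in order. For ``not C*-simple $\Rightarrow$ non-trivial amenable URS'' you and the paper do the same thing in substance: the point stabilizers on $\fb$ are amenable and give rise to a non-trivial amenable uniformly recurrent subgroup. However, you route this through the general Glasner--Weiss stabilizer-URS machinery (the stabilizer map is Borel, its continuity points form a dense $G_\delta$, and the closure of the stabilizers at continuity points is a URS). That machinery is unnecessary here and slightly delicate: the dense-$G_\delta$ claim is a Baire-category argument over the group elements and is only automatic for countable $G$, while the theorem concerns arbitrary discrete groups. The paper sidesteps this by using that $\fb$ is extremally disconnected, so $x \mapsto G_x$ is continuous \emph{everywhere}; hence $\{G_x : x \in \fb\}$ is already compact and minimal, being a continuous $G$-equivariant image of the minimal system $\fb$, with no continuity-point argument needed. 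Your Hamana-injectivity argument that stabilizers of points of $\fb$ are amenable is fine, but it re-proves a fact the paper simply cites from \cite{BKKO2014}.

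For the converse, your argument --- push an $H$-invariant measure on $\fb$ to a point mass, pass to a Chabauty-convergent subnet of the conjugates $g_\alpha H g_\alpha^{-1}$, and exhibit a point with non-trivial stabilizer --- is not the paper's main proof but is exactly the alternative argument recorded in the remark following Theorem \ref{thm:c-star-simple-iff-no-amenable-urs} (credited to Caprace and Le Boudec). The paper's own proof instead extends $\chi_H$ to a state on $\ca_\lambda(G)$, uses a finite set $F \subset G \setminus \{e\}$ meeting every conjugate of $H$ to keep $\tau_\lambda$ out of the closed convex hull of the orbit of that state, and applies Theorem \ref{thm:c-star-simple-iff-no-non-trivial-G-boundaries}; that functional-analytic route is what feeds into the recurrent-subgroup and Powers-averaging results later, whereas yours is shorter and purely dynamical. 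Two points in your version need tightening. First, a single point $x$ with $Kx = x$ and $K \neq \{e\}$ does not contradict topological freeness as you defined it (trivial stabilizers on a dense $G_\delta$); you should conclude that the action on $\fb$ is not \emph{free}, which for $\fb$ is equivalent to failure of topological freeness because $\operatorname{Fix}(s)$ is clopen by extremal disconnectedness, and then invoke \cite{KK2014} in that form. Second, you should justify $K \neq \{e\}$: non-triviality of the URS $\H$ forces $\{e\} \notin \H$, since the trivial subgroup is fixed by conjugation and minimality would otherwise give $\H = \{\{e\}\}$; this is what rules out the limit $K$ being trivial.
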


The notion of a uniformly recurrent subgroup can be viewed as a topological analogue of the measure-theoretic notion of an invariant random subgroup introduced by Ab{\'e}rt, Glasner and Vir{\'a}g \cite{AGV2014}. Many rigidity results in ergodic theory, and in particular results about the rigidity of characters on groups, can be viewed as results about the non-existence of certain invariant random subgroups. From this perspective, Theorem \ref{thm:c-star-simple-iff-no-urs-beginning} can be viewed as a kind of rigidity phenomenon in topological dynamics.

The theory of C*-simplicity began with Powers' theorem \cite{P1975} that free groups on two or more generators are C*-simple. The key insight underlying Powers' proof is that the left regular representation of these groups satisfies a very strong averaging property.

\begin{defn} \label{defn:powers-averaging-property-beginning}
A discrete group $G$ is said to have {\em Powers' averaging property} if for every element $a$ in the reduced C*-algebra $\ca_\lambda(G)$ and every $\epsilon > 0$ there are $g_1,\ldots,g_n \in G$ such that
\[
\left\| \frac{1}{n} \sum_{i=1}^n \lambda_{g_i} a \lambda_{g_i}^{-1} - \tau_\lambda(a) 1 \right\| < \epsilon,
\]
where $\tau_\lambda$ denotes the canonical tracial state on $\ca_\lambda(G)$.
\end{defn}

It is straightforward to show that any group satisfying Powers' averaging property is C*-simple. In fact, prior to the publication of \cite{KK2014} and \cite{BKKO2014}, essentially the only method for establishing the C*-simplicity of a given group was to show, often with great difficulty, that the group satisfied some variant of Powers' averaging property.

The next (perhaps somewhat surprising) result demonstrates the remarkable depth of Powers' insight. It turns out that every C*-simple group necessarily satisfies Powers' averaging property.

\begin{thm} \label{thm:intro-powers-averaging-property}
A discrete group is C*-simple if and only if it satisfies Powers' averaging property.
\end{thm}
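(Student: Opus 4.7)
The forward direction is a standard ideal argument. The plan is to take a nonzero closed two-sided ideal $J \subseteq \ca_\lambda(G)$ and a nonzero positive $a \in J$; Powers' averaging produces elements $\tfrac{1}{n}\sum_i \lambda_{g_i} a \lambda_{g_i}^{-1} \in J$ converging in norm to $\tau_\lambda(a)\, 1$, and since $\tau_\lambda(a) > 0$ by faithfulness of the trace on positives, closedness of $J$ forces $1 \in J$ and so $J = \ca_\lambda(G)$.

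For the converse, assume $G$ is C*-simple but that Powers' averaging fails for some self-adjoint $a$ with $\tau_\lambda(a) = 0$. Equivalently, $0 \notin K_a := \cconv\{\lambda_g a \lambda_g^{-1}: g \in G\}$ in norm. The plan is to contradict Theorem \ref{thm:c-star-simple-iff-no-urs-beginning} by extracting from this failure a non-trivial amenable uniformly recurrent subgroup. By Hahn--Banach separation, there exists a Hermitian bounded functional $\psi$ on $\ca_\lambda(G)$ and $\beta > 0$ with $\psi(b) \geq \beta$ for all $b \in K_a$; in particular $(g \cdot \psi)(a) \geq \beta$ for every $g \in G$, under the conjugation action $(g \cdot \psi)(x) = \psi(\lambda_g^{-1} x \lambda_g)$.

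To convert $\psi$ into a uniformly recurrent subgroup I would invoke the dynamical characterization of C*-simplicity from \cite{KK2014}: the Furstenberg boundary $\fb$ admits a canonical $G$-equivariant embedding $C(\fb) \hookrightarrow \ca_\lambda(G)^{**}$ through Hamana's injective envelope. Restricting the positive and negative parts of $\psi$ along this embedding produces signed Borel measures on $\fb$; selecting a minimal $G$-subsystem $Y$ in the weak-$*$ orbit closure of a resulting probability measure $\nu \in \P(\fb)$, and taking the point-stabiliser map $\mu \mapsto \operatorname{Stab}_G(\mu)$ (upper semicontinuous and $G$-equivariant into the Chabauty space), yields a uniformly recurrent subgroup $\H$. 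Every group in $\H$ fixes a probability measure on the strongly proximal space $\fb$ and hence is amenable by Furstenberg's criterion.

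The principal obstacle will be showing that $\H$ is non-trivial. Heuristically, if $\H$ were the trivial URS $\{\{e\}\}$, then every $\mu \in Y$ would have trivial stabiliser and the $G$-orbit through $\mu$ would be sufficiently dispersive that Ces\`aro averages of $(g \cdot \psi)(a)$ would converge to $\tau_\lambda(a) = 0$, contradicting the uniform lower bound $\beta$. Making this precise will require combining (i) the unique-trace property of $\ca_\lambda(G)$ established in \cite{BKKO2014}, (ii) the strong proximality and minimality of $G \acts \fb$, and (iii) careful bookkeeping relating the orbit dynamics on $Y$ to the averaging behaviour of $a$ in $\ca_\lambda(G)$. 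This bridge, running from an analytic separating functional back to a non-trivial amenable URS, is the delicate technical core of the argument and is the step I expect to be hardest to execute cleanly.
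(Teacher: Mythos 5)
Your forward direction is the same ideal--faithfulness argument the paper uses, and it is fine. The issue is the converse. You set up exactly the right object --- Hahn--Banach separation giving a Hermitian functional $\psi$ with $(g\cdot\psi)(a)\ge\beta>0$ for all $g\in G$ --- but at that point the paper finishes almost immediately, whereas your plan heads into a construction whose decisive step you yourself leave open. The paper's mechanism is Corollary \ref{cor:closed-convex-hull-contains-trace-dual} (obtained from Theorem \ref{thm:c-star-simple-iff-no-non-trivial-G-boundaries-in-dual}, which is proved from the state-space statement, Theorem \ref{thm:c-star-simple-iff-no-non-trivial-G-boundaries}, by a Hahn--Jordan decomposition and an iterated averaging/compactness argument): if $G$ is C*-simple, then for \emph{every} bounded linear functional $\psi$ on $\ca_\lambda(G)$ the weak* closed convex hull of $G\psi$ contains $\psi(1)\tau_\lambda$. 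Evaluating a convex combination $\sum_i c_i\, g_i\psi$ that is weak* close to $\psi(1)\tau_\lambda$ at the element $a$ produces $b$ in the norm closed convex hull of the conjugates of $a$ with $\psi(b)$ close to $\psi(1)\tau_\lambda(a)=0$, contradicting $\psi(b)\ge\beta$. No uniformly recurrent subgroups are needed for this direction (Proposition \ref{prop:weak-powers-type-condition} is just this duality packaged).

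Your route, by contrast, must manufacture a non-trivial amenable uniformly recurrent subgroup out of $\psi$ in order to contradict Theorem \ref{thm:c-star-simple-iff-no-urs-beginning}, and that is precisely the step you flag as the ``principal obstacle'' and do not carry out; as sketched it does not go through. First, there is no canonical $G$-equivariant embedding $C(\fb)\hookrightarrow \ca_\lambda(G)^{**}$ to restrict $\psi$ along, so the measure $\nu\in P(\fb)$ is not actually constructed, and nothing in the proposed construction retains the quantitative datum $(g\cdot\psi)(a)\ge\beta$. Second, the stabilizer map $\mu\mapsto\operatorname{Stab}_G(\mu)$ into the Chabauty space is only semicontinuous, so the ``image of a minimal subsystem'' is not automatically a uniformly recurrent subgroup. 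Third, and most importantly, your heuristic for non-triviality --- that trivial stabilizers would force Ces\`aro averages of $(g\cdot\psi)(a)$ down to $\tau_\lambda(a)$ --- is essentially the averaging statement being proved, so the argument is circular exactly where the content lies. (Your amenability claim is actually true: a subgroup fixing a probability measure on $\fb$ is amenable, by running the proof of \cite{BKKO2014}*{Proposition 2.7} with the invariant measure in place of a point mass; but ``Furstenberg's criterion'' is not a reference for it and it would need proof.) In short, the proposal reproduces the easy direction and the correct Hahn--Banach set-up, but the bridge from C*-simplicity to averaging of arbitrary bounded functionals is missing, and the paper supplies it through the dual-space boundary theorem rather than through uniformly recurrent subgroups.
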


In addition to this introduction there are five other sections. In Section~\ref{sec:preliminaries} we briefly review preliminary material. In Section~\ref{sec:boundary-maps-boundary-states} we clarify the relationship between C*-simplicity and the unique trace property, and obtain some technical results about the dual space of the reduced C*-algebra of a discrete group. In Section~\ref{sec:uniformly-recurrent-subgroups} we prove the characterization of C*-simplicity in terms of uniformly recurrent subgroups. In Section~\ref{sec:residually-normal-subgroups} we prove the main result characterizing C*-simplicity in terms of amenable residually normal subgroups. Finally, in Section~\ref{sec:powers-averaging-property} we prove that a group is C*-simple if and only if it has Powers' averaging property.

\subsection*{New developments} Since the first draft of this paper appeared in September 2015, a number of related developments have occurred. First, Haagerup \cite{H2015} independently obtained Theorem \ref{thm:intro-powers-averaging-property}.

Second, Theorem \ref{thm:c-star-simple-iff-no-urs-beginning} has been applied by Le Boudec and Matte Bon to study the C*-simplicity of groups of homeomorphisms of the circle, and in particular Thompson's groups $F$, $T$ and $V$. They proved that Thompson's group $V$ is C*-simple, and proved that the non-amenability of Thompson's group $F$ is equivalent to the C*-simplicity of $T$.

Third, Bryder and the author \cite{BK2016} applied similar ideas to study the maximal ideals of reduced (twisted) crossed products over C*-simple groups. In particular, we established a bijective correspondence between maximal ideals of the underlying C*-algebra and maximal ideals of the reduced crossed product.

Finally, Kawabe \cite{K2017} extended the methods introduced in this paper to undertake a systematic study of the ideal structure of reduced crossed products. In particular, he obtains necessary and sufficient conditions for a commutative C*-algebra equipped with an action of a discrete group to separate ideals in the corresponding reduced crossed product. We also mention a recent paper of Bryder \cite{B2017} that applies similar ideas to investigate the structure of reduced crossed products of noncommutative C*-algebras.

\subsection*{Acknowledgements}
The author is grateful to Adrien Le Boudec, Emmanuel Breuillard, Pierre-Emmanuel Caprace, Kenneth Davidson, Eli Glasner, Mehrdad Kalantar, Narutaka Ozawa and Sven Raum for many helpful comments and suggestions.

%%%%%%%%%%%%%%%%%%%%%%%%%%%%%%%%%%%%%%%%%%
\section{Preliminaries} \label{sec:preliminaries}

\subsection{The reduced C*-algebra}

Let $G$ be a discrete group with identity element $e$. Let $\lambda$ denote the left regular representation of $G$ on the Hilbert space $\ell^2(G)$. The {\em reduced C*-algebra} $\ca_\lambda(G)$ is the norm closed algebra generated by the image of $G$ under $\lambda$.

Let $\{\delta_g \mid g \in G\}$ denote the standard orthonormal basis for $\ell^2(G)$. Then every element $a \in \ca_\lambda(G)$ has a Fourier expansion
\[
a = \sum_{g \in G} \alpha_g \lambda_g,
\]
uniquely determined by $\alpha_g = \langle a \delta_e, \delta_g \rangle$ for $g \in G$.

A linear functional $\phi$ on $\ca_\lambda(G)$ is said to be a {\em state} if it is unital and positive, i.e. $\phi(1) = 1$ and $\phi(a) \geq 0$ for every $a \in \ca_\lambda(G)$ with $a \geq 0$. If, in addition, $\phi(ab) = \phi(ba)$ for all $a,b \in \ca_\lambda(G)$, then $\phi$ is said to be {\em tracial}. The C*-algebra $\ca_\lambda(G)$ is always equipped with a {\em canonical tracial state} $\tau_\lambda$ defined by $\tau_\lambda(a) = \langle a \delta_e, \delta_e \rangle$.

For general facts about group C*-algebras, crossed products and completely positive maps, we refer the reader to Brown and Ozawa's book \cite{BO2008}.

\subsection{C*-simplicity and the unique trace property}

A discrete group $G$ is said to be {\em C*-simple} if the reduced C*-algebra $\ca_\lambda(G)$ has no closed non-trivial two-sided ideals. If $\tau_\lambda$ is the unique tracial state on $\ca_\lambda(G)$, then $G$ is said to have the {\em unique trace property}.

The C*-simplicity of $G$ is equivalent to the following property: every representation of $G$ that is weakly contained in the left regular representation is actually weakly equivalent to the left regular representation.

In recent work with Kalantar \cite{KK2014}, we established the following dynamical characterization of C*-simplicity. See Section \ref{sec:group-actions} below for the definition of a boundary action.

\begin{thm} \label{thm:c-star-simple-iff-free-boundary-action}
A discrete group is C*-simple if and only if it has a (topologically) free boundary action.
\end{thm}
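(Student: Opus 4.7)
The plan is to use the Furstenberg boundary $\fb$ together with Hamana's theorem that $C(\fb)$ is an injective object in the category of unital $G$-C*-algebras with $G$-equivariant unital completely positive (UCP) maps. The central objects of study are $G$-equivariant UCP maps $\ca_\lambda(G) \to C(\fb)$, where $G$ acts on $\ca_\lambda(G)$ by conjugation through $\lambda$. The canonical tracial state $\tau_\lambda$ always furnishes one such map via $\ca_\lambda(G) \xrightarrow{\tau_\lambda} \bC \hookrightarrow C(\fb)$.

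First I would establish the following reformulation: $G$ is C*-simple if and only if $\tau_\lambda$ is the \emph{unique} $G$-equivariant UCP map $\ca_\lambda(G) \to C(\fb)$. For the forward direction, assume $G$ is C*-simple and take any such $\phi$. Evaluating at any $x \in \fb$ gives a state on $\ca_\lambda(G)$, whose GNS representation is automatically a representation of $G$ weakly contained in $\lambda$; by the characterization of C*-simplicity recalled in the excerpt, every such representation is weakly equivalent to $\lambda$, and combined with equivariance and the minimality of $G \acts \fb$ this pins down $\phi = \tau_\lambda$. For the converse, a non-trivial ideal in $\ca_\lambda(G)$ supplies a state vanishing on it; applying Hamana's $G$-injectivity to extend the associated equivariant UCP data produces a map into $C(\fb)$ distinct from $\tau_\lambda$.

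Next I would connect uniqueness to the dynamics of $G \acts \fb$. Equivariance of $\phi$ gives the identity $h \cdot \phi(\lambda_g) = \phi(\lambda_{hgh^{-1}})$ in $C(\fb)$. Using $G$-injectivity, extend $\phi$ to a $G$-equivariant UCP map $\Phi : B(\ell^2(G)) \to C(\fb)$. A rigidity argument—resting on the fact that $C(\fb)$ is the $G$-injective envelope of $\bC$—shows that $\Phi$ is necessarily multiplicative on the canonical equivariant copy of $C(\fb)$ sitting inside $B(\ell^2(G))$. Exploiting the relations $\lambda_g f \lambda_g^{-1} = g \cdot f$ for $f$ in this copy together with multiplicativity then yields a covariance identity of the form $\phi(\lambda_g)(x)\bigl(f(x) - f(g^{-1}x)\bigr) = 0$ for all admissible $f$ and all $x \in \fb$. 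Since such $f$ separate points, this forces $\phi(\lambda_g)$ to be supported on the fixed-point set of $g$; topological freeness then gives $\phi(\lambda_g) = 0$ for $g \ne e$, so $\phi = \tau_\lambda$.

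The main obstacle I anticipate is precisely this rigidity/extension step: extracting from $G$-injectivity enough multiplicativity of the extension $\Phi$ on an equivariant subalgebra, and translating it into the covariance identity that pins $\phi(\lambda_g)$ to the fixed-point set of $g$. The subtlety is that $\phi$ itself is only UCP, not multiplicative, so one must carefully exploit the interplay between Hamana's envelope and the inclusion $\ca_\lambda(G) \subset B(\ell^2(G))$ to transfer multiplicativity of $\Phi$ back into useful information about $\phi(\lambda_g)$. For the converse direction—when some $g \ne e$ has a non-empty open set $U$ of fixed points—the challenge is to manufacture a second equivariant UCP map by averaging a boundary state supported on $U$ over the $G$-action and invoking $G$-injectivity to lift it to a map $\ca_\lambda(G) \to C(\fb)$ for which $\phi(\lambda_g)$ is non-zero on the interior of the fixed-point set of $g$, thereby breaking uniqueness (hence C*-simplicity) by the initial reformulation.
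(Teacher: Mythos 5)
You should first be aware that the paper you were given does not prove this statement: Theorem \ref{thm:c-star-simple-iff-free-boundary-action} is quoted in the preliminaries from \cite{KK2014}, and everything hard in the present paper is deduced \emph{from} it. Your proposal is therefore an attempt to reprove the main theorem of \cite{KK2014}, and its skeleton --- (1) $G$ is C*-simple iff $\tau_\lambda$ is the only $G$-equivariant u.c.p.\ map $\ca_\lambda(G) \to C(\fb)$, and (2) such uniqueness holds iff $G \acts \fb$ is (topologically) free --- is indeed the right circle of ideas. Two pieces of it are essentially correct: your ``free $\Rightarrow$ unique'' argument (extend by $G$-injectivity, use rigidity of $C(\fb) = I_G(\bC)$ to get multiplicativity on the equivariant copy of $C(\fb)$, derive the covariance identity, conclude $\phi(\lambda_g)$ is supported on $\operatorname{Fix}(g)$) is exactly the mechanism behind Proposition \ref{prop:unique-map-iff-free}, with the extension step made explicit; and ``unique map $\Rightarrow$ simple'' works once fleshed out: the Poisson-type transform of a state killing a nonzero ideal, pushed into $C(\fb)$ by $G$-injectivity, annihilates the ideal, while $\tau_\lambda$ is faithful, so the resulting map is not $\tau_\lambda$.

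The genuine gap is your proof of ``C*-simple $\Rightarrow$ uniqueness''. Since by your step (2) uniqueness is equivalent to freeness, this claim \emph{is} the hard direction of the theorem, and the justification you give --- each state $\phi_x = \mathrm{ev}_x \circ \phi$ has GNS representation weakly contained in, hence weakly equivalent to, $\lambda$, ``and combined with equivariance and minimality this pins down $\phi = \tau_\lambda$'' --- proves nothing: weak equivalence with $\lambda$ only says the GNS representation is faithful on $\ca_\lambda(G)$ (any faithful state has this property), and faithfulness plus minimality of $\fb$ provides no mechanism forcing $\phi_x(\lambda_g) = 0$ for $g \ne e$. All known proofs of this implication require substantial additional input (the crossed-product/injective-envelope argument of \cite{KK2014}, or Haagerup's averaging argument, or the machinery of Sections 3--6 of this paper, which is itself derived from the theorem and hence unavailable to you without circularity). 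A second gap: your converse dynamical step ``not topologically free $\Rightarrow$ a second equivariant map'' cannot be carried out by ``averaging a boundary state supported on $U$''; the actual construction (Proposition \ref{prop:canon-non-canon-expectation}) rests on the amenability of the stabilizers $G_x$ (needed so that $\chi_{G_x}$ defines a state on the \emph{reduced} algebra) and on the extremal disconnectedness of $\fb$ (Frol\'ik), used both for the continuity of $x \mapsto \chi_{G_x}$ and to identify topological freeness with freeness on $\fb$; neither ingredient appears in your outline. Finally, a smaller omission: you only treat $X = \fb$, while the statement is existential over all boundaries, so deducing C*-simplicity from a topologically free action on \emph{some} boundary $X$ requires a further bridge (rigidity is special to $C(\fb)$, and preimages in $\fb$ of nowhere dense fixed-point sets in $X$ need not be nowhere dense).
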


In more recent work with Breuillard, Kalantar and Ozawa \cite{BKKO2014}, we applied Theorem \ref{thm:c-star-simple-iff-free-boundary-action} to prove that many groups are C*-simple. We observed that certain strong group-theoretic conditions are imposed on any group that does not have a topologically free boundary action. However, we also observed that these conditions do not characterize C*-simplicity.

In addition, we established the following characterization of the unique trace property.

\begin{thm}
A discrete group has the unique trace property if and only if it has no non-trivial normal amenable subgroups. In particular, every discrete C*-simple group has the unique trace property.
\end{thm}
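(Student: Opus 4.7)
The plan is to establish each direction of the equivalence separately, with the ``in particular'' corollary following as an easy consequence of the forward direction.

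For the forward implication (unique trace implies no non-trivial normal amenable subgroup), I would argue by contrapositive. Suppose $N \trianglelefteq G$ is a non-trivial amenable normal subgroup. The key input is the classical fact that amenability of $N$ forces $\lambda_{G/N} \circ \pi$ to be weakly contained in $\lambda_G$ (with $\pi: G \to G/N$ the quotient), so the map $\lambda_g \mapsto \lambda_{gN}$ extends to a surjective $*$-homomorphism $q: \ca_\lambda(G) \twoheadrightarrow \ca_\lambda(G/N)$. Composing with the canonical trace of $\ca_\lambda(G/N)$ produces a tracial state $\tau' := \tau_{G/N} \circ q$ on $\ca_\lambda(G)$ satisfying $\tau'(\lambda_n) = 1$ for every $n \in N$, while $\tau_\lambda(\lambda_n) = 0$ for $n \ne e$. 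Picking any $n \in N \setminus \{e\}$ gives $\tau' \ne \tau_\lambda$, violating unique trace.

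For the converse, suppose $G$ has trivial amenable radical and let $\tau$ be an arbitrary tracial state on $\ca_\lambda(G)$. The aim is to show $\tau(\lambda_g) = 0$ for every $g \ne e$, which will pin $\tau$ to $\tau_\lambda$. The natural tool is the Furstenberg boundary $\partial_F G$, whose function algebra $C(\partial_F G)$ is $G$-injective in the category of $G$-operator systems by Hamana's theorem. Since traces on $\ca_\lambda(G)$ are automatically invariant under the conjugation action of $G$, the functional $\tau$ is a $G$-equivariant ucp map $\ca_\lambda(G) \to \bC \hookrightarrow C(\partial_F G)$, and $G$-injectivity allows one to compare it with other equivariant ucp lifts. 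I would then invoke the dynamical input from \cite{KK2014} that the kernel of the conjugation action $G \acts \partial_F G$ coincides with the amenable radical of $G$; under our hypothesis this action is therefore faithful. A rigidity argument, matching the given equivariant map against the canonical one, then forces the Fourier coefficients at $g \ne e$ to vanish.

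The main obstacle is this last rigidity step: converting faithfulness of $G \acts \partial_F G$, together with $G$-injectivity of $C(\partial_F G)$, into the concrete conclusion that every tracial state annihilates $\lambda_g$ for each non-identity $g$. This is the technical heart of the argument and is where the dynamical structure of the Furstenberg boundary genuinely interacts with the operator-algebraic structure of $\ca_\lambda(G)$. Once it is established, the ``in particular'' statement is immediate: if $N \trianglelefteq G$ were non-trivial and amenable, the surjection $q$ constructed above would have a non-trivial kernel in $\ca_\lambda(G)$, contradicting C*-simplicity; hence any C*-simple group has trivial amenable radical and, by the equivalence just proved, enjoys the unique trace property.
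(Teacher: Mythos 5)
Your forward direction is complete and correct, and it is the standard argument: amenability of a normal subgroup $N$ makes $\lambda_{G/N}\circ\pi$ weakly contained in $\lambda_G$, giving the surjection $q:\ca_\lambda(G)\to\ca_\lambda(G/N)$, and $\tau_{G/N}\circ q$ is a trace differing from $\tau_\lambda$ on any $n\in N\setminus\{e\}$; your deduction of the ``in particular'' clause is likewise fine, since $\lambda_n-1\in\ker q$ shows C*-simplicity fails whenever such an $N$ exists. Note, however, that the paper itself does not prove this theorem: it is quoted from \cite{BKKO2014} (their Theorem 4.1 and Corollary 4.3), and only the ``in particular'' clause is rederived internally, as a consequence of Theorem \ref{thm:c-star-simple-iff-no-non-trivial-G-boundaries}. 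So the relevant comparison is with that argument and with the machinery of Section \ref{sec:boundary-maps-boundary-states}.

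The genuine gap is in the converse, which is where the whole content of the theorem lies: you name the right objects ($\fb$, Hamana's $G$-injectivity of $C(\fb)$, faithfulness of $G\acts\fb$ when the amenable radical is trivial) but then assert that ``a rigidity argument \dots forces the Fourier coefficients at $g\ne e$ to vanish,'' explicitly flagging it as the main obstacle. As stated this does not follow: viewing $\tau$ as a $G$-equivariant ucp map $\ca_\lambda(G)\to\bC\subset C(\fb)$ and knowing the action is faithful gives nothing directly, because $\tau$ is defined only on $\ca_\lambda(G)$, which does not contain $C(\fb)$, so no localization at points of $\fb$ can be performed. The missing step — which is exactly how \cite{BKKO2014} argues, and is the mechanism of Propositions \ref{prop:canon-non-canon-expectation} and \ref{prop:unique-map-iff-free} — is to use $G$-injectivity of $C(\fb)$ to extend $\tau$ to a $G$-equivariant ucp map $\Psi:C(\fb)\rtimes_\lambda G\to C(\fb)$; Hamana's rigidity (the identity is the only $G$-equivariant ucp map of $C(\fb)$ into itself) forces $\Psi|_{C(\fb)}=\id$, so $C(\fb)$ lies in the multiplicative domain of $\Psi$. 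Then for $g\ne e$, faithfulness (equivalent to triviality of the amenable radical, since the kernel of the action is normal and amenable while the amenable radical fixes $\fb$ pointwise) provides $x\in\fb$ with $gx\ne x$, and a clopen $U$ with $x\in U$, $x\notin gU$ gives
\[
\tau(\lambda_g)=\Psi(\lambda_g)(x)=\Psi(\lambda_g\chi_U)(x)=\Psi(\chi_{gU}\lambda_g)(x)=\chi_{gU}(x)\,\Psi(\lambda_g)(x)=0,
\]
using that $\Psi(\lambda_g)=\tau(\lambda_g)1$ is a constant function. This is the same computation as in Proposition \ref{prop:unique-map-iff-free}, with freeness weakened to faithfulness precisely because the map in question is scalar-valued. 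Without this extension-plus-rigidity-plus-multiplicative-domain step, your converse is a plan rather than a proof.
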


It is well known that a C*-simple discrete group necessarily has no normal amenable subgroups (see e.g. \cite{H2007}).

\subsection{Group actions}\label{sec:group-actions}

Let $G$ be a discrete group. A compact Hausdorff space $X$ is said to be a {\em $G$-space} if $G$ acts by homeomorphisms on $X$. This is equivalent to the existence of a group homomorphism from $G$ into the group of homeomorphisms on $X$. For $g \in G$ and $x \in X$ we will write $gx$ for the image of $x$ under $g$.

A C*-algebra $A$ is said to be a {\em $G$-C*-algebra} if $G$ acts by automorphisms on $A$. This is equivalent to the existence of a group homomorphism from $G$ into the group of automorphisms of $A$. For $g \in G$ and $a \in A$ we will write $ga$ for the image of $a$ under $g$.

An example of a $G$-C*-algebra is provided by the C*-algebra $C(X)$ of continuous functions on a $G$-space $X$. The $G$-action on $C(X)$ is given by
\[
(gf)(x) = f(g^{-1}x), \quad g \in G,\ f \in C(X),\ x \in X.
\]

Another example of a $G$-C*-algebra is provided by the reduced C*-algebra $\ca_\lambda(G)$. The $G$-action on $\ca_\lambda(G)$ is given by
\[
ga = \lambda_g a \lambda_g^{-1}, \quad g \in G,\ a \in A.
\]

If $A$ is a $G$-C*-algebra, then the dual space $A^*$, equipped with the weak* topology, is a $G$-space with respect to the action
\[
(g\phi)(a) = \phi(g^{-1}a), \quad g \in G,\ a \in A,\ \phi \in A^*.
\]
If $A$ is unital, then the state space $S(A)$ of $A$ is a weak*-closed $G$-invariant subset of $A^*$. In particular, if $X$ is a $G$-space, then the space $P(X)$ of probability measures on $X$ is a $G$-space since it can be identified with the state space $S(C(X))$ of $C(X)$.

A $G$-space $X$ is said to be a {\em $G$-boundary} if for every $\mu \in P(X)$ and $x \in X$, the point mass $\delta_x$ belongs to the weak* closure of the orbit $G\mu$. There is always a unique boundary $\fb$ called the {\em Furstenberg boundary} of $G$ \cite{F1973}*{Section 4} that is universal in the sense that for every $G$-boundary $X$, there is a surjective $G$-equivariant map from $\fb$ onto $X$.

A convex $G$-space $K$ is said to be {\em affine} if
\[
g(\alpha x + (1-\alpha) y) = \alpha gx + (1-\alpha)gy,
\]
for all $g \in G$, $0 \leq \alpha \leq 1$ and $x,y \in K$. If, in addition, $K$ contains no proper affine $G$-space, then $K$ is said to be {\em minimal}. By Zorn's lemma, every affine $G$-space contains a minimal affine $G$-space. We will frequently use the fact from \cite{G1976}*{Theorem III.2.3} that if $K$ is a minimal affine $G$-space, then the closure $\cl \ex K$ of the set of extreme points of $K$ is a $G$-boundary.

%%%%%%%%%%%%%%%%%%%%%%%%%%%%%%%%%%%%%%%%%%
\section{Boundary maps and boundary states} \label{sec:boundary-maps-boundary-states}

In \cite{BKKO2014}, it was shown that a group has the unique trace property, meaning that its reduced C*-algebra has a unique tracial state, if and only if it has no non-trivial normal amenable subgroups. In particular, every C*-simple group has the unique trace property. While the results of Le Boudec \cite{L2015} imply that the converse does not hold, questions remain about the exact relationship between C*-simplicity and the unique trace property. In this section, we will completely resolve these questions.

Let $G$ be a discrete group with Furstenberg boundary $\fb$. By identifying the scalars with the the constant functions in $C(\fb)$, every tracial state on the reduced C*-algebra $\ca_\lambda(G)$ can be viewed as a $G$-equivariant unital completely positive map from $\ca_\lambda(G)$ to $C(\fb)$. The existence of the canonical trace $\tau_\lambda$ on $\ca_\lambda(G)$ ensures that there is always at least one such map.

The next result gives a complete description of the $G$-equivariant unital completely positive maps from $\ca_\lambda(G)$ to $C(\fb)$ in terms of $G$-boundaries in the state space $S(\ca_\lambda(G))$ of the reduced C*-algebra $\ca_\lambda(G)$, i.e. weak* compact $G$-invariant subsets of $S(\ca_\lambda(G))$ that are $G$-boundaries.

Since the canonical tracial state $\tau_\lambda$ is $G$-invariant, the singleton $\{\tau_\lambda\}$ is trivially a $G$-boundary in $S(\ca_\lambda(G))$. We will say that a $G$-boundary $X \subset S(\ca_\lambda(G))$ is {\em non-trivial} if $X \ne \{\tau_\lambda\}$. Note that if $\phi$ is a non-canonical tracial state on $\ca_\lambda(G)$, then the singleton $\{\phi\}$ is $G$-equivariant, and hence is a non-trivial $G$-boundary.

\begin{prop} \label{prop:correspondence-maps-boundaries}
For a discrete group $G$, there is a bijective correspondence between $G$-equivariant unital completely positive maps from $\ca_\lambda(G)$ to $C(\fb)$ and $G$-boundaries in the state space of $\ca_\lambda(G)$.
\end{prop}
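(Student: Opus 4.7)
The plan is to realize both sides of the bijection as equivalent repackagings of the same data, using the standard duality between unital completely positive maps into commutative C*-algebras and weak*-continuous maps into state spaces.

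First, I would identify the two parametrizations. A u.c.p.\ map $\Phi : \ca_\lambda(G) \to C(\fb)$ is the same datum as a weak*-continuous map $\pi_\Phi : \fb \to S(\ca_\lambda(G))$, where $\pi_\Phi(x)(a) = \Phi(a)(x)$: unitality and positivity of $\Phi$ are equivalent to $\pi_\Phi(x)$ being a state for each $x$, and $G$-equivariance of $\Phi$ transfers verbatim to $G$-equivariance of $\pi_\Phi$. Under this identification the forward map in the bijection is $\Phi \mapsto X_\Phi := \pi_\Phi(\fb)$.

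Second, I would check that $X_\Phi$ is always a $G$-boundary in $S(\ca_\lambda(G))$. It is weak*-compact and $G$-invariant by construction. To verify the boundary condition, given $\mu \in P(X_\Phi)$ and $\phi = \pi_\Phi(x) \in X_\Phi$, I would lift $\mu$ to some $\tilde\mu \in P(\fb)$ with $(\pi_\Phi)_* \tilde\mu = \mu$ (such a lift exists because the pushforward $P(\fb) \to P(X_\Phi)$ is surjective onto continuous images). Since $\fb$ itself is a $G$-boundary, there is a net $g_i \in G$ with $g_i \tilde\mu \to \delta_x$; pushing this convergence forward through the continuous equivariant map $\pi_\Phi$ yields $g_i\mu \to \delta_\phi$, as required.

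Third, for the reverse direction, given a $G$-boundary $X \subset S(\ca_\lambda(G))$, the universality of $\fb$ stated in the preliminaries produces a continuous $G$-equivariant map $\pi : \fb \to X$. Because every $G$-boundary is minimal (taking $\mu = \delta_y$ in the definition forces every orbit to be dense), the image $\pi(\fb)$ is all of $X$. Setting $\Phi_X(a)(x) := \pi(x)(a)$ then produces a u.c.p.\ $G$-equivariant map with $X_{\Phi_X} = X$, provided the assignment is independent of the choice of $\pi$.

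The main technical point, and where I expect the real work to lie, is showing that the $G$-equivariant map $\pi : \fb \to X$ is \emph{unique}; this is simultaneously what makes $X \mapsto \Phi_X$ well-defined and what ensures $\Phi \mapsto X_\Phi$ is injective. My plan is as follows: given two such maps $\pi_1, \pi_2$, the set $E = \{x \in \fb : \pi_1(x) = \pi_2(x)\}$ is closed and $G$-invariant, so by minimality of $\fb$ it suffices to show $E \neq \emptyset$. To produce a point in $E$, fix any $x \in \fb$ and apply the strong proximality of $X$ to the measure $\tfrac{1}{2}(\delta_{\pi_1(x)} + \delta_{\pi_2(x)}) \in P(X)$: there is a net $g_i \in G$ making this measure converge to a Dirac mass $\delta_y$, which forces $g_i\pi_j(x) \to y$ for $j=1,2$. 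After passing to a subnet with $g_ix \to x' \in \fb$, continuity gives $\pi_1(x') = \pi_2(x') = y$, so $x' \in E$. With uniqueness in hand, the two assignments are visibly inverse to each other, completing the bijection.
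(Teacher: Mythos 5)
Your argument is correct, and it takes a somewhat different route from the paper's. The paper handles the forward direction by appealing to Glasner's results on affine flows: it forms $K = \Phi^*(P(\fb))$, cites \cite{G1976}*{Proposition III.2.4} to get that $K$ is a minimal affine $G$-space, and cites \cite{G1976}*{Theorem III.2.3} to conclude that $\cl \ex K$ is a $G$-boundary; you instead verify the boundary property of $X_\Phi = \pi_\Phi(\fb)$ directly, by lifting a measure on $X_\Phi$ to $P(\fb)$ and pushing the strong proximality of $\fb$ forward through the continuous equivariant map $\pi_\Phi$. The two assignments agree: since extreme points of $\Phi^*(P(\fb))$ are images of point masses and $\pi_\Phi(\fb)$ is minimal, one has $\cl \ex K = \pi_\Phi(\fb)$. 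The reverse direction (universality of $\fb$ plus contravariance) is the same in both treatments. What your version buys is completeness on the word ``bijective'': the paper only exhibits the two constructions and leaves the fact that they are mutually inverse implicit, whereas you isolate the real issue --- uniqueness of the $G$-equivariant map $\fb \to X$ --- and prove it by the standard proximality argument (the agreement set $E$ is closed and invariant, and averaging $\tfrac12(\delta_{\pi_1(x)}+\delta_{\pi_2(x)})$ and collapsing it to a point mass shows $E \neq \emptyset$, so $E = \fb$ by minimality). All the supporting steps you use (surjectivity of the pushforward $P(\fb) \to P(X_\Phi)$ via Hahn--Banach, minimality of continuous equivariant images of $\fb$, minimality of $G$-boundaries) are correct, so there is no gap; your write-up is in fact more self-contained than the paper's, at the cost of redoing by hand what the citations to \cite{G1976} provide.
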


\begin{proof}
Let $\Phi : \ca_\lambda(G) \to C(\fb)$ be a $G$-equivariant unital completely positive map and let $\Phi^* : P(\fb) \to S(\ca_\lambda(G))$ denote the restriction of the adjoint of $\phi$ to the space $P(\fb)$ of probability measures on $\fb$. By \cite{G1976}*{Proposition III.2.4}, the range $K = \Phi^*(P(\fb))$ of $\Phi^*$ is a minimal affine $G$-space, and by \cite{G1976}*{Theorem III.2.3}, the closure $X = \cl \ex K$ of the set of extreme points of $K$ is a $G$-boundary.

Conversely, if $X \subset S(\ca_\lambda(G))$ is a $G$-boundary, then by the universality of $\fb$, there is a surjective $G$-equivariant map $\phi : \fb \to X$. By contravariance, $\phi$ induces a $G$-equivariant unital completely positive map $\Phi : \ca_\lambda(G) \to C(\fb)$ defined for $a \in \ca_\lambda(G)$ by
\[
\Phi(a)(x) = \phi(x)(a), \quad x \in \fb. \qedhere
\]
\end{proof}

\begin{prop} \label{prop:canon-non-canon-expectation}
For a discrete group $G$ there is a $G$-equivariant unital completely positive map $\phi : \ca_\lambda(G) \to C(\fb)$ satisfying $\phi(\lambda_s) = \chi_{\operatorname{Fix}(s)}$ for $s \in G$, where $\operatorname{Fix}(s) = \{x \in \fb : sx = x\}$ denotes the set of points in $\fb$ that are fixed by $s$ for $s \in G$.
\end{prop}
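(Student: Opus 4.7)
The plan is to apply Proposition~\ref{prop:correspondence-maps-boundaries}: it suffices to exhibit a $G$-boundary $X \subset S(\ca_\lambda(G))$ such that the corresponding $G$-equivariant unital completely positive map takes the prescribed values on the $\lambda_s$. I would build $X$ as the image of $\fb$ under a continuous $G$-equivariant map $\Psi \colon \fb \to S(\ca_\lambda(G))$.

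For $x \in \fb$ let $G_x = \{s \in G : sx = x\}$ be its stabilizer. A key input is that $G_x$ is amenable for every $x \in \fb$. This can be seen as a consequence of the $G$-injectivity of $C(\fb)$: the character $\operatorname{ev}_x \colon C(\fb) \to \bC$ is a $G_x$-invariant state, from which one deduces (cf.\ \cite{BKKO2014}) that $G_x$ admits an invariant mean. By Hulanicki's theorem, amenability of $G_x$ is equivalent to the quasi-regular representation $\pi_x$ of $G$ on $\ell^2(G/G_x)$ being weakly contained in the left regular representation $\lambda$, so $\pi_x$ extends to a $*$-representation of $\ca_\lambda(G)$. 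Define the vector state
\[
\phi_x(a) = \langle \pi_x(a) \delta_{[e]}, \delta_{[e]} \rangle, \qquad a \in \ca_\lambda(G),
\]
and note that $\phi_x(\lambda_s) = \chi_{G_x}(s) = \chi_{\operatorname{Fix}(s)}(x)$ for every $s \in G$.

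Set $\Psi(x) = \phi_x$. Equivariance of $\Psi$ follows from $G_{gx} = gG_xg^{-1}$, which yields $g \cdot \phi_x = \phi_{gx}$, as checked on the generators $\lambda_s$. Continuity of $\Psi$ reduces to continuity of the maps $x \mapsto \chi_{\operatorname{Fix}(s)}(x)$, i.e.\ to the clopenness of $\operatorname{Fix}(s) \subset \fb$ for each $s \in G$. This follows from the extremal disconnectedness of $\fb$, established in \cite{KK2014} as a consequence of the $G$-injectivity of $C(\fb)$. Granting continuity, the image $X = \Psi(\fb)$ is a weak*-compact $G$-invariant subset of $S(\ca_\lambda(G))$, and it is a $G$-boundary as a continuous $G$-equivariant image of the $G$-boundary $\fb$ (pushing forward a measure $\mu \in P(X)$ through any continuous section and using boundary-ness of $\fb$). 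Applying Proposition~\ref{prop:correspondence-maps-boundaries} to $X$ produces the required map $\phi$, with $\phi(\lambda_s)(x) = \phi_x(\lambda_s) = \chi_{\operatorname{Fix}(s)}(x)$.

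The main obstacle is the pair of structural inputs to the middle step: amenability of point stabilizers on $\fb$, and clopenness of their fixed-point sets. Both hinge on the fine structure of the action $G \curvearrowright \fb$ coming from its role as the $G$-injective envelope of $\bC$; if either is not yet available at this point in the text, a more intrinsic route via the canonical embedding $\ca_\lambda(G) \hookrightarrow C(\fb) \rtimes_r G$ and a $G$-injective extension $E \colon C(\fb) \rtimes_r G \to C(\fb)$ of the identity on $C(\fb)$ would be needed, with the support of $E(u_s)$ pinned down by the $C(\fb)$-bimodule identity coming from $u_s f = (sf) u_s$.
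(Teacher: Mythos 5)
Your proposal is correct and follows essentially the same route as the paper: amenability of the stabilizers $G_x$ (so that $\chi_{G_x}$ extends to a state on $\ca_\lambda(G)$, which is exactly your quasi-regular vector state), continuity of $x \mapsto \chi_{G_x}$ via extremal disconnectedness of $\fb$, and then Proposition~\ref{prop:correspondence-maps-boundaries} applied to the resulting boundary $X=\{\chi_{G_x} : x\in\fb\}$. The only cosmetic remark is that the image of a boundary is a boundary because every $\nu \in P(X)$ lifts to some $\mu \in P(\fb)$ (surjectivity of the pushforward), not via a continuous section, which need not exist.
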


\begin{proof}
For $x \in \fb$, the stabilizer subgroup $G_x$ is amenable by \cite{BKKO2014}*{Proposition 2.7}). Hence the indicator function $\chi_{G_x}$ extends by linearity to a state on $\ca_\lambda(G)$ that we continue to denote by $\chi_{G_x}$ (see e.g. \cite{BO2008}*{Chapter 2}).

The set $X = \{\chi_{G_x} : x \in \fb\} \subset S(\ca_\lambda(G))$ is the image of $\fb$ under the map taking $x$ to $\chi_{G_x}$. This map is clearly $G$-equivariant. We claim that it is also continuous. To see this, let $(x_i)$ be a net in $\fb$ with $\lim x_i = x$ for $x \in \fb$. We must show that $\lim \chi_{G_{x_i}} = \chi_{G_x}$.

By \cite{KK2014}*{Remark 3.16}, $\fb$ is extremally disconnected, so Frol{\'\i}k's theorem \cite{Fro1971} implies that for $s \in G$, $\operatorname{Fix}(s)$ is clopen. Hence $\lim \chi_{G_{x_i}}(\lambda_s) = \chi_{G_x}(\lambda_s)$. By linearity, $\lim \chi_{G_{x_i}}(a) = \chi_{G_x}(a)$ for every $a \in \operatorname{span}\{\lambda_s : s \in G\}$. Since this subset is dense in $\ca_\lambda(G)$ and the net $(\chi_{G_{x_i}})$ is bounded, it follows that $\lim \chi_{G_{x_i}} = \chi_{G_x}$.

Since $X$ is the image of $\fb$ under a continuous $G$-equivariant map, it is a boundary in the state space of $\ca_\lambda(G)$. It is easy to check that the map $\phi : \ca_\lambda(G) \to C(\fb)$ is the corresponding $G$-equivariant unital completely positive map constructed as in Proposition \ref{prop:correspondence-maps-boundaries}.
\end{proof}

\begin{prop} \label{prop:unique-map-iff-free}
For a discrete group $G$ the only $G$-equivariant unital completely positive map from $\ca_\lambda(G)$ to $C(\fb)$ is the canonical trace $\tau_\lambda$ if and only if the action on $\fb$ is free.
\end{prop}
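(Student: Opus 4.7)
For the forward direction, the approach is short and direct using Proposition \ref{prop:canon-non-canon-expectation}. Assume the only $G$-equivariant unital completely positive map $\ca_\lambda(G) \to C(\fb)$ is the canonical trace $\tau_\lambda$ (embedded via constants). Then the map $\phi$ supplied by Proposition \ref{prop:canon-non-canon-expectation}, which satisfies $\phi(\lambda_s) = \chi_{\operatorname{Fix}(s)}$, must coincide with the canonical trace. Hence $\chi_{\operatorname{Fix}(s)} = \tau_\lambda(\lambda_s) \cdot 1 = 0$ for every $s \ne e$, so $\operatorname{Fix}(s) = \emptyset$, and the action on $\fb$ is free.

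For the reverse direction, my plan is to pass to the reduced crossed product $C(\fb) \rtimes_r G$ and exploit Choi's multiplicative domain theorem. Assume $G$ acts freely on $\fb$ and let $\Phi : \ca_\lambda(G) \to C(\fb)$ be a $G$-equivariant unital completely positive map. There is a canonical $G$-equivariant embedding $\ca_\lambda(G) \hookrightarrow C(\fb) \rtimes_r G$ sending $\lambda_g$ to $\lambda_g$, and by the $G$-injectivity of $C(\fb)$ (the $G$-injective envelope of $\bC$), I would extend $\Phi$ to a $G$-equivariant unital completely positive map $\tilde{\Phi} : C(\fb) \rtimes_r G \to C(\fb)$. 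The restriction $\tilde{\Phi}|_{C(\fb)}$ is then a $G$-equivariant unital completely positive self-map of $C(\fb)$, and by the rigidity of the Furstenberg boundary---the standard Hamana-type rigidity of the $G$-injective envelope of $\bC$---this restriction must be the identity.

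By Choi's theorem $C(\fb)$ is therefore contained in the multiplicative domain of $\tilde{\Phi}$. Applying $\tilde{\Phi}$ to both sides of the covariance relation $\lambda_g f = (g \cdot f) \lambda_g$ in $C(\fb) \rtimes_r G$, and pulling the $C(\fb)$-factors through, yields
\[
\tilde{\Phi}(\lambda_g) f = (g \cdot f) \tilde{\Phi}(\lambda_g),
\]
equivalently, $\tilde{\Phi}(\lambda_g)(x) [f(x) - f(g^{-1}x)] = 0$ for every $x \in \fb$ and every $f \in C(\fb)$. For $g \ne e$, freeness of the action ensures $g^{-1}x \ne x$ for all $x$, so Urysohn's lemma produces some $f \in C(\fb)$ separating $x$ and $g^{-1}x$, which forces $\tilde{\Phi}(\lambda_g)(x) = 0$. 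Since $x$ was arbitrary, $\Phi(\lambda_g) = \tilde{\Phi}(\lambda_g) = 0$ for every $g \ne e$ while $\Phi(1) = 1$; by linearity and density of $\bC[G]$ in $\ca_\lambda(G)$ this forces $\Phi$ to equal $\tau_\lambda$ (viewed as a map into the constants).

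The main obstacle I anticipate is the cleanest way to invoke the rigidity step, namely that every $G$-equivariant unital completely positive endomorphism of $C(\fb)$ is the identity. This is the essential Hamana-type property of the $G$-injective envelope, which I expect to import from the preliminaries or from \cite{KK2014}. Once it is in place, the remainder is a direct covariance and multiplicative-domain computation.
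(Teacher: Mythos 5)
Your proof is correct and follows essentially the same route as the paper: the forward direction via the map of Proposition \ref{prop:canon-non-canon-expectation}, and the reverse direction via the covariance/multiplicative-domain computation, with your extension to $C(\fb) \rtimes_r G$ and the rigidity of $C(\fb)$ making explicit exactly what the paper's phrase ``$C(\fb)$ belongs to the multiplicative domain of $\phi$'' presupposes. The only cosmetic difference is that the paper separates $x$ from $sx$ by a clopen set (using extremal disconnectedness of $\fb$) where you use Urysohn's lemma.
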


\begin{proof}
The forward direction follows from the fact that the map constructed in Proposition \ref{prop:canon-non-canon-expectation} agrees with the canonical trace on $\ca_\lambda(G)$ if and only if the action on $\fb$ is free.

For the other direction, let $\phi : \ca_\lambda(G) \to C(\fb)$ be a $G$-equivariant unital completely positive map. Suppose the action on $\fb$ is free. Proceeding as in the proof of \cite{BKKO2014}*{Theorem 3.1}, we can extend $\phi$ to a $G$-equivariant unital completely positive map $\psi : C(\fb) \times_r G \to C(\fb)$ such that $C(\fb)$ belongs to the multiplicative domain of $\psi$.

By \cite{KK2014}*{Remark 3.16}, $\fb$ is extremally disconnected, so Frol{\'\i}k's theorem \cite{Fro1971} implies that for $s \in G \setminus \{e\}$ and $x \in \fb$ there is a clopen subset $U \subset \fb$ such that $x \in U$ and $x\notin sU$. Let $\chi_U \in C(\fb)$ denote the indicator function for $U$. Since $C(\fb)$ belongs to the multiplicative domain of $\psi$,
\begin{multline*}
\phi(\lambda_s)(x) = \phi(\lambda_s)(x)\chi_U(x) = \psi(\lambda_s \chi_U)(x) = \psi(\chi_{sU} \lambda_s)(x) \\
= \chi_{sU}(x)\phi(\lambda_s)(x) = 0.
\end{multline*}
Hence $\phi(\lambda_s) = 0$ and we conclude that $\phi$ agrees with the canonical trace on $\ca_\lambda(G)$.
\end{proof}

By \cite{KK2014}*{Theorem 6.2}, the freeness of the action on $\fb$ is equivalent to the C*-simplicity of $G$. Hence Proposition \ref{prop:unique-map-iff-free} implies the following result, which roughly says that a discrete group is C*-simple if and only if the only ``trace-like'' map on its reduced C*-algebra is the canonical tracial state.

\begin{thm} \label{thm:c-star-simple-iff-unique-map}
A discrete group $G$ is C*-simple if and only if the only $G$-equivariant unital completely positive map from $\ca_\lambda(G)$ to $C(\fb)$ is the canonical trace $\tau_\lambda$.
\end{thm}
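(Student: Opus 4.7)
The plan is to obtain Theorem~\ref{thm:c-star-simple-iff-unique-map} as an immediate consequence of Proposition~\ref{prop:unique-map-iff-free} together with the dynamical characterization of C*-simplicity from \cite{KK2014}. All of the work has, in effect, already been done in the preceding propositions; Theorem~\ref{thm:c-star-simple-iff-unique-map} is just the result of chaining two equivalences through the common intermediate condition ``the action $G \acts \fb$ is free''.

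Concretely, I would argue as follows. First, invoke \cite{KK2014}*{Theorem 6.2} (already stated in the text as the conclusion preceding Theorem~\ref{thm:c-star-simple-iff-unique-map}) to replace the left-hand side ``$G$ is C*-simple'' by the dynamical condition ``the action on $\fb$ is free''. Second, apply Proposition~\ref{prop:unique-map-iff-free} to replace this dynamical condition by the statement that $\tau_\lambda$ is the unique $G$-equivariant u.c.p.\ map $\ca_\lambda(G) \to C(\fb)$. Composing the two equivalences yields the theorem in both directions.

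The only minor book-keeping is to check that the two senses of ``free'' used across \cite{KK2014} and Proposition~\ref{prop:unique-map-iff-free} agree: the former is phrased as topological freeness of a boundary action, but on $\fb$ (which is extremally disconnected, minimal, and on which $G$ acts by homeomorphisms of a compact Hausdorff space whose clopen algebra separates orbits) topological freeness is equivalent to freeness, which is exactly what the proof of Proposition~\ref{prop:unique-map-iff-free} uses via the clopen set $U$ separating $x$ from $sx$. Modulo this observation, there is no new content to establish and no real obstacle; the theorem is essentially a reformulation of Proposition~\ref{prop:unique-map-iff-free} in the light of the Kalantar--Kennedy criterion, and the proof will consist of a single short paragraph citing those two results.
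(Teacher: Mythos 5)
Your proposal is correct and is exactly the paper's argument: the theorem is stated as an immediate consequence of \cite{KK2014}*{Theorem 6.2} combined with Proposition~\ref{prop:unique-map-iff-free}, precisely the chaining through freeness of the action on $\fb$ that you describe (including the standard observation that on the extremally disconnected space $\fb$ topological freeness and freeness coincide).
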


\begin{rem}
If $G$ is a non-C*-simple discrete group with the unique trace property, then Theorem \ref{thm:c-star-simple-iff-unique-map} implies the existence of a $G$-equivariant unital completely positive map from $\ca_\lambda(G)$ to $C(\fb)$ that is not scalar-valued.
\end{rem}

Combining Proposition \ref{prop:correspondence-maps-boundaries} and Theorem \ref{thm:c-star-simple-iff-unique-map} gives the following result.

\begin{thm} \label{thm:c-star-simple-iff-no-non-trivial-G-boundaries}
A discrete group $G$ is C*-simple if and only if there are no non-trivial $G$-boundaries in the state space of $\ca_\lambda(G)$.
\end{thm}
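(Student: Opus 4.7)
The plan is to derive this result as a direct corollary of the two main results of the section: the bijective correspondence in Proposition \ref{prop:correspondence-maps-boundaries} between $G$-equivariant unital completely positive maps $\ca_\lambda(G)\to C(\fb)$ and $G$-boundaries in $S(\ca_\lambda(G))$, and Theorem \ref{thm:c-star-simple-iff-unique-map}, which identifies C*-simplicity with the uniqueness of such maps. The strategy is to observe that under the correspondence, the trivial $G$-boundary $\{\tau_\lambda\}$ matches the canonical trace $\tau_\lambda$ viewed as a map with scalar range, so that the two uniqueness statements become interchangeable.

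First I would verify this matching explicitly. Regarding $\tau_\lambda$ as a $G$-equivariant unital completely positive map $\ca_\lambda(G)\to\bC\subset C(\fb)$ with constant-function range, its adjoint $\tau_\lambda^*:P(\fb)\to S(\ca_\lambda(G))$ sends every probability measure $\mu$ on $\fb$ to $\tau_\lambda$. Consequently the minimal affine $G$-space $K=\tau_\lambda^*(P(\fb))$ produced in the proof of Proposition \ref{prop:correspondence-maps-boundaries} is the singleton $\{\tau_\lambda\}$, and $\cl\ex K=\{\tau_\lambda\}$ is the associated $G$-boundary. Thus the trivial boundary on the state-space side corresponds exactly to the canonical trace on the UCP side.

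For the forward direction, I would assume $G$ is C*-simple. Theorem \ref{thm:c-star-simple-iff-unique-map} then says the only $G$-equivariant unital completely positive map from $\ca_\lambda(G)$ to $C(\fb)$ is $\tau_\lambda$. By the bijection of Proposition \ref{prop:correspondence-maps-boundaries} and the identification above, the only $G$-boundary in $S(\ca_\lambda(G))$ is $\{\tau_\lambda\}$; in other words there are no non-trivial ones. Conversely, if there are no non-trivial $G$-boundaries in $S(\ca_\lambda(G))$, then $\{\tau_\lambda\}$ is the unique such boundary, and the bijection forces $\tau_\lambda$ to be the unique $G$-equivariant unital completely positive map from $\ca_\lambda(G)$ to $C(\fb)$; another appeal to Theorem \ref{thm:c-star-simple-iff-unique-map} gives C*-simplicity.

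There is no serious obstacle here; the only subtlety is making the identification between $\tau_\lambda$ and the trivial $G$-boundary $\{\tau_\lambda\}$ precise under the correspondence of Proposition \ref{prop:correspondence-maps-boundaries}, and this amounts to the short computation of the adjoint sketched above.
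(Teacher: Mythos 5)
Your proposal is correct and follows essentially the same route as the paper, which obtains the theorem precisely by combining Proposition \ref{prop:correspondence-maps-boundaries} with Theorem \ref{thm:c-star-simple-iff-unique-map}; your explicit check that the canonical trace corresponds to the trivial boundary $\{\tau_\lambda\}$ under the correspondence is a worthwhile detail that the paper leaves implicit.
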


Observe that Theorem \ref{thm:c-star-simple-iff-no-non-trivial-G-boundaries} clarifies the relationship between C*-simplicity and the unique trace property. In particular, since non-canonical traces give rise to non-trivial boundaries in the state space of the reduced C*-algebra, Theorem \ref{thm:c-star-simple-iff-no-non-trivial-G-boundaries} implies the result from \cite{BKKO2014}*{Corollary 4.3} that discrete C*-simple groups have the unique trace property.

\begin{cor} \label{cor:closed-convex-hull-contains-trace}
A discrete group $G$ is C*-simple if and only if for every state $\phi$ on $\ca_\lambda(G)$, the weak* closed convex hull of $G\phi$ contains the canonical trace $\tau_\lambda$ on $\ca_\lambda(G)$.
\end{cor}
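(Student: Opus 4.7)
The plan is to deduce both directions from Theorem~\ref{thm:c-star-simple-iff-no-non-trivial-G-boundaries} together with the Glasner fact from~\cite{G1976}*{Theorem III.2.3} recalled at the end of Section~\ref{sec:preliminaries}: whenever $L$ is a minimal affine $G$-space, $\cl \ex L$ is a $G$-boundary.

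For the forward direction I would fix a state $\phi$ on $\ca_\lambda(G)$ and form $K := \cconv(G\phi)$, which is a nonempty weak* compact convex $G$-invariant subset of $S(\ca_\lambda(G))$. Zorn's lemma produces a minimal affine $G$-subspace $L$ inside $K$, and then $\cl \ex L$ is a $G$-boundary. Theorem~\ref{thm:c-star-simple-iff-no-non-trivial-G-boundaries} forces $\cl \ex L = \{\tau_\lambda\}$, and Krein--Milman collapses $L$ itself to $\{\tau_\lambda\}$; in particular $\tau_\lambda \in K$.

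For the reverse direction I would show the hypothesis forbids any non-trivial $G$-boundary in $S(\ca_\lambda(G))$ and then apply Theorem~\ref{thm:c-star-simple-iff-no-non-trivial-G-boundaries}. Let $X \subset S(\ca_\lambda(G))$ be a $G$-boundary and pick $\phi \in X$. The hypothesis gives $\tau_\lambda \in \cconv(G\phi) \subset \cconv(X)$, so $\tau_\lambda$ is the barycenter of some $\mu \in P(X)$. For an arbitrary $x \in X$, the strong proximality built into the definition of a $G$-boundary produces a net $(g_i)$ in $G$ with $g_i\mu \to \delta_x$ in $P(X)$. Applying the continuous, $G$-equivariant barycenter map $P(X) \to S(\ca_\lambda(G))$: on the one hand $g_i\mu$ is sent to $g_i\tau_\lambda = \tau_\lambda$ since $\tau_\lambda$ is $G$-invariant, and on the other $\delta_x$ is sent to $x$. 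Passing to the limit yields $\tau_\lambda = x$, and since $x \in X$ was arbitrary, $X = \{\tau_\lambda\}$.

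I expect the only delicate point to be the barycenter representation of $\tau_\lambda$ used in the reverse direction, which relies on the standard fact that every point in the weak* closed convex hull of a weak*-compact set in a dual space is the barycenter of a regular probability measure supported on that set, combined with the continuity and $G$-equivariance of the barycenter map. Everything else is a direct invocation of the preceding theorems.
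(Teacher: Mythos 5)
Your proof is correct and follows essentially the same route as the paper: both directions reduce to Theorem~\ref{thm:c-star-simple-iff-no-non-trivial-G-boundaries} together with Glasner's fact that the closed set of extreme points of a minimal affine $G$-space inside $\cconv(G\phi)$ is a $G$-boundary. Your reverse direction, carried out via the continuous $G$-equivariant barycenter map on $P(X)$ and strong proximality, is a correct filling-in of the detail that the paper leaves implicit in its one-line appeal to these same facts.
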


\begin{proof}
By Theorem \ref{thm:c-star-simple-iff-no-non-trivial-G-boundaries}, $G$ is C*-simple if and only if the only $G$-boundary in the state space of $\ca_\lambda(G)$ is the singleton $\{\tau_\lambda\}$. The result follows from the fact that for every state $\phi$ on $\ca_\lambda(G)$, the weak* closed convex hull of $G\phi$ contains a minimal affine $G$-space $K$, and by \cite{G1976}*{Theorem III.2.3}, the closure $\cl \ex K$ of the set of extreme points of $K$ is a $G$-boundary. 
\end{proof}

More generally, we need to consider boundaries in the dual $\ca_\lambda(G)^*$ of $\ca_\lambda(G)$, i.e. weak* compact $G$-invariant subsets that are $G$-boundaries. We will say that a $G$-boundary $X \subset \ca_\lambda(G)^*$ is {\em non-trivial} if $X \ne \{\phi\}$, where $\phi$ is a scalar multiple of the canonical tracial state $\tau_\lambda$.

The next result strengthens Theorem \ref{thm:c-star-simple-iff-no-non-trivial-G-boundaries}.

\begin{thm} \label{thm:c-star-simple-iff-no-non-trivial-G-boundaries-in-dual}
A discrete group $G$ is C*-simple if and only if there are no non-trivial $G$-boundaries in the dual space of $\ca_\lambda(G)$.
\end{thm}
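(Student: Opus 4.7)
The forward direction is immediate: if $G$ is not C*-simple, Theorem \ref{thm:c-star-simple-iff-no-non-trivial-G-boundaries} produces a non-trivial $G$-boundary $X \subset S(\ca_\lambda(G)) \subset \ca_\lambda(G)^*$, and $X$ remains non-trivial as a boundary in the dual space because every state sends $1$ to $1$, so the only singleton scalar multiple of $\tau_\lambda$ that $X$ could equal is $\{\tau_\lambda\}$ itself, which is excluded by hypothesis.

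For the converse, suppose $G$ is C*-simple and let $X \subset \ca_\lambda(G)^*$ be a $G$-boundary. The boundary property forces $X$ to be minimal as a $G$-space, and the weak-* continuous $G$-invariant function $\phi \mapsto \phi(1)$ is thus constant on $X$ with some value $c$. Set $K = \overline{\mathrm{co}(X)}^{w^*}$; by \cite{G1976}*{Proposition III.2.4}, $K$ is a minimal affine $G$-space. Since $c\tau_\lambda$ is a $G$-fixed point (as $\tau_\lambda$ is tracial), once we verify $c\tau_\lambda \in K$, minimality of $K$ yields $K = \{c\tau_\lambda\}$ and consequently $X = \{c\tau_\lambda\}$, which is trivial. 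The theorem therefore reduces to the averaging statement extending Corollary \ref{cor:closed-convex-hull-contains-trace} from states to arbitrary bounded functionals:
\[
\phi(1)\tau_\lambda \in \overline{\mathrm{co}(G\phi)}^{w^*} \qforal \phi \in \ca_\lambda(G)^*.
\]

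To prove this, use the Hermitian decomposition together with the Jordan decomposition of each Hermitian part to write $\phi = \phi_1 - \phi_2 + i(\phi_3 - \phi_4)$ with each $\phi_j$ positive, and normalize the nonzero ones to states $\psi_j = \phi_j/\phi_j(1)$ (setting $\psi_j = \tau_\lambda$ if $\phi_j = 0$). It suffices to exhibit a single net of convex averages $\mu_n \in P(G)$ with $\mu_n \cdot \psi_j \to \tau_\lambda$ weak-* simultaneously for every $j$, for then linearity gives $\mu_n \cdot \phi \to \phi(1)\tau_\lambda$. To produce such a net, consider $S(\ca_\lambda(G))^4$ equipped with the diagonal $G$-action and let $L$ be the weak-* closed convex hull of the orbit of $(\psi_1,\psi_2,\psi_3,\psi_4)$. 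Choose a minimal affine $G$-subspace $L_0 \subset L$ by Zorn's lemma; by \cite{G1976}*{Theorem III.2.3}, $\cl \ex L_0$ is a $G$-boundary. Since the image of a $G$-boundary under an equivariant continuous surjection is again a $G$-boundary, each coordinate projection of $\cl \ex L_0$ is a $G$-boundary in $S(\ca_\lambda(G))$, and Theorem \ref{thm:c-star-simple-iff-no-non-trivial-G-boundaries} forces it to equal $\{\tau_\lambda\}$. Hence $\cl \ex L_0 = \{(\tau_\lambda,\tau_\lambda,\tau_\lambda,\tau_\lambda)\}$; by Krein--Milman and minimality of $L_0$, we conclude $L_0 = \{(\tau_\lambda,\ldots,\tau_\lambda)\}$, and any approximating net of convex combinations of orbit elements supplies the required $\mu_n$.

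The main obstacle is the simultaneous averaging step. Corollary \ref{cor:closed-convex-hull-contains-trace} averages only one state at a time, but recombining the four positive parts of $\phi$ into $\phi(1)\tau_\lambda$ requires a common averaging net. The key observation that resolves this is that boundary-ness is preserved under equivariant continuous surjections, which lets us reduce the product problem in $S(\ca_\lambda(G))^4$ coordinate-wise to the already-established state-space form of the theorem.
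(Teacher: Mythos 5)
Your proposal is correct, and while it shares the paper's skeleton --- the same forward direction via Theorem \ref{thm:c-star-simple-iff-no-non-trivial-G-boundaries} and the same Hahn--Jordan decomposition $\phi = \phi_1 - \phi_2 + i(\phi_3 - \phi_4)$ --- it handles the central averaging step by a genuinely different device. The paper fixes a minimal affine $G$-space $K \subset \ca_\lambda(G)^*$ and averages the four positive parts \emph{sequentially}: Corollary \ref{cor:closed-convex-hull-contains-trace} supplies a net of finite averages carrying $\phi_1$ to $\phi_1(1)\tau_\lambda$, a subnet is extracted so the remaining parts also converge, and the procedure is repeated for each part in turn, the $G$-invariance of $\tau_\lambda$ ensuring earlier progress is preserved; minimality of $K$ then concludes. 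You instead manufacture a \emph{single} net of convex averages working simultaneously for all four normalized parts, by passing to the diagonal action on $S(\ca_\lambda(G))^4$, taking a minimal affine subspace $L_0$ of the closed convex hull of the orbit of $(\psi_1,\ldots,\psi_4)$, and noting that the coordinate projections of the boundary $\cl \ex L_0$ (via \cite{G1976}*{Theorem III.2.3}) are $G$-boundaries in the state space, hence equal to $\{\tau_\lambda\}$ by Theorem \ref{thm:c-star-simple-iff-no-non-trivial-G-boundaries}. Your route buys the simultaneous averaging net in one shot, so it proves the dual-space averaging statement (Corollary \ref{cor:closed-convex-hull-contains-trace-dual}) directly and without subnet bookkeeping, and by arguing from an arbitrary boundary $X$ and its closed convex hull you make explicit the passage from ``minimal affine subspaces are multiples of $\tau_\lambda$'' to ``no non-trivial boundaries,'' which the paper leaves implicit. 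The price is two auxiliary facts: that continuous equivariant images of $G$-boundaries are $G$-boundaries (standard, and implicitly used by the paper in Remark \ref{rem:c-star-simple-iff-special-urs}), and that the weak* closed convex hull of a $G$-boundary is a minimal affine $G$-space --- your appeal to \cite{G1976}*{Proposition III.2.4} for the latter is not quite the statement the paper cites it for (that concerns the range of the adjoint of an equivariant u.c.p.\ map), but the fact follows from the usual barycenter argument, so this is only a citation quibble, not a gap.
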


\begin{proof}
One direction is clearly implied by Theorem \ref{thm:c-star-simple-iff-no-non-trivial-G-boundaries}.

For the other direction, suppose $G$ is a C*-simple discrete group. Let $K$ be a minimal affine $G$-space in $\ca_\lambda(G)^*$ and fix $\phi \in K$. We must show that $\phi$ is a scalar multiple of the canonical trace $\tau_\lambda$.

By the Hahn-Jordan decomposition for bounded linear functionals in C*-algebras, we can write $\phi = \phi_1 - \phi_2 + i(\phi_3 - \phi_4)$, where each $\phi_i$ is a non-negative scalar multiple of a state on $\ca_\lambda(G)$.

By Corollary \ref{cor:closed-convex-hull-contains-trace}, the weak* closed convex hull of $G\phi_1$ contains $\phi_1(1) \tau_\lambda$. Hence there is a net $(\alpha_j)$ of non-negative finitely supported sequences in $\bR^G$ such that $\sum_{s \in G} \alpha_j(s) = 1$ for each $j$ and
\[
\lim_j \sum_{s \in G} \alpha_j(s) s \phi_1 = \phi_1(1) \tau_\lambda. 
\]

By applying compactness and passing to a subnet, we can assume that for each $i=2,3,4$, $\sum_{s \in G} \alpha_j(s) s \phi_i$ also converges to a non-negative multiple of a state, say $\psi_i$. Hence $\phi_1(1) \tau_\lambda - \psi_2 + i(\psi_3 - \psi_4) \in K$. Since $\tau_\lambda$ is $G$-invariant, it follows by the same argument that $\phi_1(1)\tau_\lambda - \psi_2(1)\tau_\lambda + i(\psi_3' - \psi_4') \in K$, where $\psi_3'$ and $\psi_4'$ are non-negative multiplies of states. Applying this argument two more times, we conclude that $K$ contains a scalar multiple of $\tau_\lambda$. Since $K$ is minimal and $\tau_\lambda$ is $G$-invariant, the result now follows.
\end{proof}

Applying Theorem \ref{thm:c-star-simple-iff-no-non-trivial-G-boundaries-in-dual} and arguing as in the proof of Corollary \ref{cor:closed-convex-hull-contains-trace}  gives the following result. 

\begin{cor} \label{cor:closed-convex-hull-contains-trace-dual}
A discrete group $G$ is C*-simple if and only if for every bounded linear functional $\phi$ on $\ca_\lambda(G)$,  the weak* closed convex hull of $G\phi$ contains $\phi(1)\tau_\lambda$, where $\tau_\lambda$ denotes the canonical trace on $\ca_\lambda(G)$.
\end{cor}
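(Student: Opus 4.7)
The plan is to follow the strategy flagged by the paper's remark before the statement, mirroring the proof of Corollary~\ref{cor:closed-convex-hull-contains-trace} but replacing Theorem~\ref{thm:c-star-simple-iff-no-non-trivial-G-boundaries} with its stronger dual-space refinement Theorem~\ref{thm:c-star-simple-iff-no-non-trivial-G-boundaries-in-dual}. The reverse direction is essentially free: if the averaging conclusion holds for every bounded linear functional on $\ca_\lambda(G)$, then in particular it holds for every state $\phi$ (for which $\phi(1)=1$), and Corollary~\ref{cor:closed-convex-hull-contains-trace} then yields C*-simplicity.

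For the forward direction, suppose $G$ is C*-simple and let $\phi \in \ca_\lambda(G)^*$. I would let $C$ denote the weak*-closed convex hull of the orbit $G\phi$. Since the $G$-action on $\ca_\lambda(G)^*$ is isometric, $G\phi$ is contained in the closed ball of radius $\|\phi\|$, which is weak*-compact by Banach--Alaoglu; hence $C$ is a weak*-compact, convex, $G$-invariant subset of $\ca_\lambda(G)^*$, i.e.\ an affine $G$-space. By Zorn's lemma, $C$ contains a minimal affine $G$-space $K$, and by \cite{G1976}*{Theorem III.2.3}, $\cl \ex K$ is a $G$-boundary in $\ca_\lambda(G)^*$. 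Invoking Theorem~\ref{thm:c-star-simple-iff-no-non-trivial-G-boundaries-in-dual}, this boundary must be trivial, so $\cl \ex K = \{c\tau_\lambda\}$ for some scalar $c$. Since $c\tau_\lambda$ is $G$-invariant, $\{c\tau_\lambda\}$ is itself an affine $G$-subspace of $K$, and minimality forces $K = \{c\tau_\lambda\} \subset C$.

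It remains to identify the scalar $c$. The unit $1 \in \ca_\lambda(G)$ is $G$-fixed, so evaluation at $1$ is a $G$-invariant, weak*-continuous affine functional on $\ca_\lambda(G)^*$; it equals $\phi(1)$ on the orbit $G\phi$, and hence equals $\phi(1)$ on the weak*-closed convex hull $C$. Applying this to $c\tau_\lambda \in C$ gives $c = c\tau_\lambda(1) = \phi(1)$, which places $\phi(1)\tau_\lambda$ in $C$ as required. I do not anticipate any real obstacle here; the only mild technical points are the weak*-compactness of $C$ (handled by boundedness of $\phi$ and Banach--Alaoglu) and the pinning down of the scalar $c$ via evaluation at the $G$-fixed unit.
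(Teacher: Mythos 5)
Your proof is correct and follows essentially the same route the paper indicates: apply Theorem~\ref{thm:c-star-simple-iff-no-non-trivial-G-boundaries-in-dual} to a minimal affine $G$-space inside the weak*-closed convex hull of $G\phi$, exactly as in the proof of Corollary~\ref{cor:closed-convex-hull-contains-trace}. Your explicit identification of the scalar via evaluation at the unit, and the reduction of the converse to the state case, are just the details the paper leaves implicit.
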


%%%%%%%%%%%%%%%%%%%%%%%%%%%%%%%%%%%%%%%%%%
\section{Uniformly recurrent subgroups} \label{sec:uniformly-recurrent-subgroups}

Let $G$ be a discrete group and let $\S(G)$ denote the compact space of subgroups of $G$ equipped with the {\em Chabauty topology}, which coincides with the product topology on $\{0,1\}^G$.

Convergence in the Chabauty topology can be described in the following way: a net of subgroups $(H_i) < G$ converges in the Chabauty topology to a subgroup $H < G$ if
\begin{enumerate}
\item every $h \in H$ eventually belongs to $H_i$ and
\item for every subnet $(H_j)$, $\cap_j H_j \subset H$.
\end{enumerate}

The space $\S(G)$ is a $G$-space with respect to the conjugation action of $G$. Let $\S_a(G)$ denote the $G$-subspace of amenable subgroups of $G$. It is clear that $\S_a(G)$ is $G$-invariant. By \cite{S1971}, $\S_a(G)$ is closed.

The notion of a uniformly recurrent subgroup of $G$ was recently introduced by Glasner and Weiss \cite{GW2015} as a topological-dynamical analogue of the notion of an invariant random subgroup. A $G$-subspace $X \subset \S(G)$ is said to be a {\em uniformly recurrent subgroup} of $G$ if it is minimal, i.e. if $\{gHg^{-1} \mid g \in G\}$ is dense in $X$ for every $H \in X$. If $X \subset \S_a(G)$, then $X$ is said to be {\em amenable}. If $X \ne \{\{e\}\}$, where $\{e\}$ denotes the trivial subgroup of $G$, then $X$ is said to be {\em non-trivial}.

\begin{thm} \label{thm:c-star-simple-iff-no-amenable-urs}
A discrete group $G$ is C*-simple if and only if it has no non-trivial amenable uniformly recurrent subgroups.
\end{thm}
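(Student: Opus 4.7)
The plan is to use Corollary \ref{cor:closed-convex-hull-contains-trace-dual} for the forward direction and Proposition \ref{prop:canon-non-canon-expectation} (together with the dynamical characterization of C*-simplicity) for the converse. The main bridge between subgroup dynamics and the dynamics on the state space of $\ca_\lambda(G)$ is the observation used in Proposition \ref{prop:canon-non-canon-expectation}: for every amenable subgroup $H < G$, the indicator $\chi_H$ is positive definite and extends by linearity to a state on $\ca_\lambda(G)$.

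For the direction ``non-trivial amenable URS implies not C*-simple,'' assume a non-trivial amenable URS $X \subset \S_a(G)$ is given. I would first note that $\{e\} \notin X$: the singleton $\{\{e\}\}$ is a closed $G$-invariant subspace of $\S(G)$, so minimality of $X$ together with $X \ne \{\{e\}\}$ forces every $H \in X$ to be non-trivial. By compactness in the Chabauty topology, the open cover of $X$ by the clopen sets $V_g = \{H : g \in H\}$ with $g \in G \setminus \{e\}$ admits a finite subcover indexed by some finite $F \subset G \setminus \{e\}$; hence $H \cap F \ne \emptyset$ for every $H \in X$. Since the map $H \mapsto \chi_H$ is $G$-equivariant and weak*-continuous on $X$ (pointwise convergence on the norm-dense subspace $\spn\{\lambda_s\}$ together with uniform norm-boundedness), the image $Y = \{\chi_H : H \in X\}$ is a compact $G$-invariant subset of $S(\ca_\lambda(G))$. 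Evaluating an arbitrary $\mu = \sum_i c_i \chi_{H_i} \in \operatorname{co}(Y)$ at $a = \sum_{s \in F} \lambda_s$ gives
\[
\mu(a) = \sum_i c_i |H_i \cap F| \geq \sum_i c_i = 1,
\]
while $\tau_\lambda(a) = 0$, so $\tau_\lambda \notin \ol{\operatorname{co}}(Y)$. On the other hand, if $G$ were C*-simple, Corollary \ref{cor:closed-convex-hull-contains-trace-dual} applied to the state $\chi_H$ for any $H \in X$ would give $\tau_\lambda \in \ol{\operatorname{co}}(G\chi_H) \subset \ol{\operatorname{co}}(Y)$, a contradiction.

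For the converse, suppose $G$ is not C*-simple. By Theorem \ref{thm:c-star-simple-iff-free-boundary-action}, the action of $G$ on $\fb$ is not topologically free, so some stabilizer $G_x$ is non-trivial. The construction in the proof of Proposition \ref{prop:canon-non-canon-expectation} produces a continuous $G$-equivariant map $\fb \to S(\ca_\lambda(G))$ given by $x \mapsto \chi_{G_x}$; its image $Y$ is a continuous $G$-equivariant image of the $G$-boundary $\fb$ and therefore a $G$-boundary (an elementary push-forward argument with probability measures). Composing with the Chabauty-weak* homeomorphism $\chi_H \leftrightarrow H$, I obtain a $G$-boundary $Z = \{G_x : x \in \fb\}$ sitting inside $\S_a(G)$. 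Every $G$-boundary is minimal (otherwise pushing a measure supported off a dense orbit could not approximate a point mass in that orbit), so $Z$ is a URS; it is amenable since every $G_x$ is (by \cite{BKKO2014}*{Proposition 2.7}); and it is non-trivial because the existence of $x$ with $G_x \ne \{e\}$ puts a non-trivial subgroup in $Z$.

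The main obstacle is ensuring that all transports between the two sides are truly continuous and preserve the relevant dynamical structure. For direction 1 this means verifying continuity of $H \mapsto \chi_H$ and confirming that Corollary \ref{cor:closed-convex-hull-contains-trace-dual} really applies to the state $\chi_H$; once this is in place, the counting argument with $F$ is elementary. For direction 2 the delicate point is the continuity of the stabilizer map $x \mapsto G_x$, which relies on the extremal disconnectedness of $\fb$ from \cite{KK2014}*{Remark 3.16}, together with the fact that continuous equivariant images of boundaries remain boundaries and hence minimal.
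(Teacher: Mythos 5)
Your argument is correct and follows essentially the same route as the paper: the forward direction uses the extension of $\chi_H$ to a state for amenable $H$, the finite set $F$ coming from the Chabauty neighborhood basis of $\{e\}$, and the convex-hull consequence of Theorem \ref{thm:c-star-simple-iff-no-non-trivial-G-boundaries} (you invoke Corollary \ref{cor:closed-convex-hull-contains-trace-dual} directly, which is just the packaged form of the paper's extreme-point argument), while the converse uses the stabilizer map $x \mapsto G_x$ on $\fb$, its continuity via extremal disconnectedness, amenability of stabilizers, and non-topological-freeness from the dynamical characterization, exactly as in the paper.
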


\begin{proof}
($\Leftarrow$) Suppose that $G$ is not C*-simple. Let $X = \{G_x \mid x \in \fb\}$, where $G_x$ denotes the stabilizer subgroup of a point $x \in \fb$. Then $X$ is the image of $\fb$ under the map taking $x$ to $G_x$. This map is $G$-equivariant, and it is not difficult to check that it is continuous using the fact from \cite{KK2014}*{Remark 3.16} that $\fb$ is extremally disconnected.

Since $\fb$ is compact and minimal, $X$ is also compact and minimal. Furthermore, by \cite{BKKO2014}*{Lemma 2.3}, $G_x$ is amenable for each $x \in \fb$. Hence by \cite{S1971}, $X \subset \S_a(G)$, and thus $X$ is an amenable uniformly recurrent subgroup.

By \cite{KK2014}*{Theorem 6.2}, since $G$ is not C*-simple, the $G$-action on $\fb$ is not topologically free. In particular, $G_x \ne \{e\}$ for some $x \in \fb$, and hence $X$ is non-trivial.

($\Rightarrow$)
Suppose that $G$ has a non-trivial amenable uniformly recurrent subgroup $X$. Fix $H \in X$. Then $H$ is amenable and by minimality, $X = \cl \{gHg^{-1} \mid g \in G\}$. Since $H$ is amenable, the indicator function $\chi_H$ extends to a state $\phi$ on $\ca_\lambda(G)$.

For finite $F \subset G \setminus \{e\}$, let $U_F = \{H \in S(G) : H \cap F = \emptyset \}$. Then the family $\{U_F : F \subseteq G \setminus \{e\} \text{ finite}\}$ is a neighborhood basis for the trivial subgroup $\{e\}$ in $S(G)$. Since $X$ is non-trivial, $\{e\} \notin X$. Hence there is finite $F \subset G \setminus \{e\}$ such that $F \cap gHg^{-1} \ne \emptyset$ for all $g \in G$.

Let $a = \sum_{s \in F} \lambda_s \in \ca_\lambda(G)$. Then for $g \in G$,
\[
(g\phi)(a) = \sum_{s \in F} \chi_{gHg^{-1}}(s) = |F \cap gHg^{-1}| \geq 1.
\]
Letting $K$ be a minimal affine $G$-subspace of the closed convex hull of $G \phi$, it follows that for every $\psi \in K$, $\psi(a) \geq 1$. By \cite{G1976}*{Theorem III.2.3}, the closure $Y = \cl \ex K$ of the set of extreme points of $K$ is a $G$-boundary, and $Y$ does not contain the canonical trace $\tau_\lambda$. In particular, the state space of $\ca_\lambda(G)$ contains a non-trivial $G$-boundary, and the result now follows by Theorem \ref{thm:c-star-simple-iff-no-non-trivial-G-boundaries}.
\end{proof}

\begin{rem}
Pierre-Emmanuel Caprace and Adrien Le Boudec kindly suggested the following alternative proof of the forward implication of Theorem \ref{thm:c-star-simple-iff-no-amenable-urs}.

Suppose that $G$ has a non-trivial amenable uniformly recurrent subgroup $X$. Fix $H \in X$. Then $H$ is amenable and by minimality, $X = \cl \{gHg^{-1} \mid g \in G\}$. Consider the space $P(\fb)$ of probability measures on $\fb$. Fix $x \in \fb$ and let $\delta_x \in P(\fb)$ denote the corresponding point mass. By the amenability of $H$, there is a probability measure $\mu \in P(\fb)$ fixed by $H$. Since $\fb$ is a boundary, there is a net $(g_i) \in G$ such that $\lim_i g_i \mu = \delta_x$.

By compactness, after passing to a subnet we can suppose that $g_i H g_i^{-1}$ converges to $K \in \S(G)$, and since $X$ is non-trivial, $K \ne \{e\}$. By the definition of the Chabauty topology, this implies that $\cap_i g_i H g_i^{-1} \ne \{e\}$. Fix $g \in \cap_i g_i H g_i^{-1} \setminus \{e\}$. Then $g g_i \mu = g_i \mu$ for each $i$. Taking the limit gives $g \delta_x = \delta_x$ and hence $gx = x$. Thus $G$ does not act freely on $\fb$, and it follows from \cite{KK2014}*{Theorem 6.2} that $G$ is not C*-simple.
\end{rem}

\begin{rem} \label{rem:c-star-simple-iff-special-urs}
The proof of Theorem \ref{thm:c-star-simple-iff-no-amenable-urs} actually implies the following result: a discrete group $G$ is not C*-simple if and only if the family of point stabilizers $\{G_x \mid x \in \fb\}$ is a non-trivial amenable uniformly recurrent subgroup for $G$. In fact, this family is a $G$-boundary, since it is the image of the Furstenberg boundary under a $G$-equivariant map.
\end{rem}

\begin{example}
For any non-trivial amenable group $G$, the singleton $\{G\}$ is a non-trivial amenable uniformly recurrent subgroup, and hence by Theorem \ref{thm:c-star-simple-iff-no-amenable-urs}, we obtain the well known fact that a non-trivial amenable group is never C*-simple.
\end{example}

\begin{example}
Let $G$ be a discrete group with only countably many amenable subgroups and no non-trivial normal amenable subgroups. It was shown in \cite{BKKO2014}*{Theorem 3.8} that $G$ is C*-simple. We will give another proof using Theorem \ref{thm:c-star-simple-iff-no-amenable-urs}. 

 Let $X$ be an amenable uniformly recurrent subgroup. We must show that $X$ is trivial. By assumption, $X$ is a countable set, and hence it has an isolated point, say $H$. By compactness and minimality, finitely many conjugates of the open set $\{H\}$ cover $X$. Hence $X$ consists of finitely many conjugates of $H$. Since $X$ is $G$-invariant, it follows that $H$ is almost normal, i.e. $H$ has finitely many conjugates in $G$. Equivalently, the normalizer $N_G(H)$ of $H$ has finite index in $G$.
 
Since $N_G(H)$ has finite index, there is a subgroup $N_0 < N_G(H)$ that is normal as a subgroup of $G$ and has finite index in $G$. For $g \in G$, let $N_g = N_0 \cap gHg^{-1}$. Then each $N_g$ is amenable and normal in $N_0$. Let $N < G$ denote the subgroup generated by all of the $N_g$. Then $N$ is amenable since it is generated by finitely many amenable subgroups that are normal in $N_0$. Moreover, $N$ is normal in $G$. Hence by assumption $N$ is trivial. In particular, $N_e = N_0 \cap H$ is trivial.

Since $N_0$ has finite index in $G$, it follows that $H$ is finite. Therefore, the conjugacy class of every element in $H$ must be finite, i.e. $H$ must be a subgroup of the FC-center of $G$. But since the FC-center is amenable and normal in $G$, it is trivial by assumption. Hence $H$ is trivial, and it follows that $X$ is trivial. By Theorem \ref{thm:c-star-simple-iff-no-amenable-urs}, we conclude that $G$ is C*-simple.

This result applies to Tarski monster groups and torsion-free Tarski monster groups, which were shown to be C*-simple in \cite{KK2014}*{Corollary 6.6} and \cite{BKKO2014}*{Corollary 3.9} respectively. In addition, by \cite{I1994}, this result applies to the family of free Burnside groups $B(m,n)$ for $m \geq 2$ and $n$ odd and sufficiently large, which were recently shown to be C*-simple by Olshanskii and Osin \cite{OO2014}.
\end{example}

\begin{rem} \label{rem:invariant-random-subgroups}
An {\em invariant random subgroup} of a discrete group $G$ is a $G$-invariant probability measure $\mu$ on the space of subgroups $\S(G)$, which can be viewed as the distribution of a random subgroup of $G$. This notion was introduced by Ab\'ert, Glasner and Vir\'ag \cite{AGV2014}. 

If $\mu$ is supported on the $G$-subspace $\S_a(G)$ of amenable subgroups of $G$, then $\mu$ is said to be {\em amenable}. If $\mu \ne \delta_{\{e\}}$, where $\delta_{\{e\}}$ denotes the point mass corresponding to the trivial subgroup $\{e\}$, then $\mu$ is said to be {\em non-trivial}.

A group with the unique trace property, and in particular any C*-simple group (see \cite{BKKO2014}*{Corollary 4.3}), does not have non-trivial amenable invariant random subgroups. This can be seen by invoking \cite{T2012}*{Theorem 5.14}, which implies that a non-trivial amenable invariant random subgroup gives rise to a non-canonical tracial state on the reduced C*-algebra as in the proof of Theorem \ref{thm:c-star-simple-iff-no-amenable-urs}.

Thus while Le Boudec's examples from \cite{L2015} of non-C*-simple groups with the unique trace property do not have non-trivial amenable invariant random subgroups, they do have non-trivial amenable uniformly recurrent subgroups.
\end{rem}

%%%%%%%%%%%%%%%%%%%%%%%%%%%%%%%%%%%%%%%%%%
\section{Residually normal subgroups} \label{sec:residually-normal-subgroups}

In this section, by unraveling the definition of a uniformly recurrent subgroup from Section \ref{sec:uniformly-recurrent-subgroups} and invoking Theorem \ref{thm:c-star-simple-iff-no-amenable-urs}, we will establish a more algebraic characterization of C*-simplicity in terms of the existence of amenable subgroups of the following type.

\begin{defn} \label{defn:residually-normal-subgroup}
Let $G$ be a discrete group. A subgroup $H < G$ is said to be {\em residually normal} if there exists a finite subset $F \subset G \setminus \{e\}$ such that $F \cap gHg^{-1} \ne \emptyset$ for all $g \in G$.
\end{defn}

Note that residually normal subgroups are non-trivial, and that every non-trivial normal subgroup is residually normal. The following characterization of residually normal subgroups follows immediately from the description of the Chabauty topology in Section \ref{sec:uniformly-recurrent-subgroups}, as in the proof of the forward implication of Theorem \ref{thm:c-star-simple-iff-no-amenable-urs}.

\begin{prop} \label{prop:residually-normal-subgroups}
Let $G$ be a discrete group and let $H < G$ be a subgroup. The following are equivalent:
\begin{enumerate}
\item The subgroup $H$ is residually normal.
\item For every net $(g_i)$ in $G$ there is a subnet $(g_j)$ such that
\[
\bigcap_j g_j H g_j^{-1} \ne \{e\}.
\]
\item The trivial subgroup $\{e\}$ does not belong to the closure $\overline{\operatorname{Conj}}(H) \subset S(G)$ of the set $\operatorname{Conj}(H) = \{ gHg^{-1} \mid g \in G\}$ of subgroups conjugate to $H$, where the closure is taken in the Chabauty topology.
\end{enumerate}
\end{prop}

Let $G$ be a discrete group. It follows immediately from the definition that the closure of the orbit of an amenable residually normal subgroup $H < G$ in $S(G)$ contains a non-trivial amenable uniformly recurrent subgroup. On the other hand, if $X \subset S(G)$ is an amenable uniformly recurrent subgroup, then every $H \in X$ is amenable and residually normal. The following characterization of C*-simplicity in terms of amenable residually normal subgroups is therefore an immediate consequence of Theorem~\ref{thm:c-star-simple-iff-no-amenable-urs}.

\begin{thm} \label{thm:c-star-simple-iff-no-amenable-residually-normal-subgroups}
A discrete group is C*-simple if and only if it has no amenable residually normal subgroups.
\end{thm}

\begin{rem}
In this remark we consider the relationship between residually normal subgroups and normalish subgroups as introduced in \cite{BKKO2014}.

For a discrete group $G$, a subgroup $H < G$ is said to be {\em normalish} if for any finite sequence $g_1,\ldots,g_n \in G$, the intersection of the conjugates $\cap_n g_nHg_n^{-1}$ is infinite. In particular, every infinite normal subgroup is normalish.

A key result in \cite{BKKO2014}*{Theorem 2.12} implies that a discrete group with no non-trivial finite normal subgroups and no amenable normalish subgroups is C*-simple. However, C*-simple groups can have amenable normalish subgroups (see Example \ref{ex:baumslag-solitar}). Hence by Theorem \ref{thm:c-star-simple-iff-no-amenable-residually-normal-subgroups}, amenable normalish subgroups are not necessarily residually normal.

On the other hand, infinite residually normal subgroups are not necessarily normalish. For instance, if $G$ is a discrete group and $H < G$ is a subgroup with only finitely many conjugates, then $H$ is always residually normal, but $H$ is normalish if and only if it contains an infinite normal subgroup.

However, the following strengthening of Theorem \ref{thm:c-star-simple-iff-no-amenable-residually-normal-subgroups} does hold: a discrete group is C*-simple if and only if it has no amenable residually normal subgroups that are either finite normal or normalish.

To see this, recall from Remark \ref{rem:c-star-simple-iff-special-urs} that a discrete group $G$ is not C*-simple if and only if $\{G_x \mid x \in \fb\}$ is a non-trivial amenable uniformly recurrent subgroup for $G$. In this case, by minimality it follows from Proposition \ref{prop:residually-normal-subgroups} that each $G_x$ is residually normal, and it follows from \cite{BKKO2014}*{Lemma 2.10} that each $G_x$ is amenable and normalish. In this way, we obtain \cite{BKKO2014}*{Theorem 2.12}.
\end{rem}

\begin{example} \label{ex:baumslag-solitar}
In this example we consider the Baumslag-Solitar group $BS(m,n)$ for $|m| \ne |n|$ and $|m|,|n| \geq 2$, defined by
\[
BS(m,n) = \langle a,t \mid t^{-1} a^m t = a^n \rangle.
\]
The cyclic subgroup $\langle a \rangle$ is clearly amenable, and it was observed in \cite{BKKO2014}*{Section 3.7} that it is  normalish. However, it is easy to check that it is not residually normal. In fact, by \cite{HP2011}, $BS(m,n)$ is C*-simple, and hence by Theorem \ref{thm:c-star-simple-iff-no-amenable-residually-normal-subgroups} it has no amenable residually normal subgroups. 
\end{example}

\begin{example} \label{ex:le-boudec}
In this example we consider Le Boudec's recent example \cite{L2015} of non-C*-simple groups with the unique trace property.

Let $T$ be a locally finite tree with boundary $\partial T$. For a fixed vertex $v$ in $T$, points in $\partial T$ correspond to geodesic rays in $T$ starting at $v$. Let $\aut(T)$ denote the automorphism group of $T$, and let $G < \aut(T)$ be a subgroup.

Suppose $G$ has the property that if $T'$ is one of the two trees obtained by deleting an edge of $T$, then there is a non-trivial element $g \in G$ that fixes every vertex in $T'$. Fix $x \in \partial T$, and let $G_x = \{g \in G \mid gx = x\}$ denote the corresponding stabilizer subgroup in $G$. We will show that $G_x$ is residually normal. 

Let $(g_n) \in G$ be a sequence of elements and let $x_n = g_nx$. Then $g_n G_x g_n^{-1} = G_{x_n}$. By local finiteness, there is an edge $f$ incident to $v$ such that there is a subsequence $(x_{n_k})$ with the property that the geodesic ray corresponding to each $x_{n_k}$ contains $f$. Then by the assumptions on $G$,
\[
\bigcap_k g_{n_k} G_x g_{n_k}^{-1} = \bigcap_k G_{x_{n_k}} \ne \{e\}.
\]
Hence $G_x$ is residually normal.

Le Boudec's examples arise as enlargements of groups acting on their Bass-Serre tree, and satisfy all of the above properties. In addition, they have the property that  stabilizer subgroups corresponding to points in the boundary are amenable. Hence by Theorem  \ref{thm:c-star-simple-iff-no-amenable-residually-normal-subgroups}, these groups are not C*-simple.

Furthermore, Le Boudec shows that these groups act minimally and contain two hyperbolic elements without common fixed points. This is well known to imply that there are no non-trivial amenable normal subgroups. Hence by the characterization in \cite{BKKO2014}*{Theorem 4.1}, these groups have the unique trace property.
\end{example}

\begin{example} \label{ex:thompson-group-1}
In this example, we consider the Thompson groups $F$ and $T$, viewed as subgroups of homeomorphisms acting on the interval $[0,1]$. We recall that $F$ arises as the stabilizer subgroup of $T$ corresponding to the point $0$.

We will show that $F$ is residually normal as a subgroup of $T$. To see this, let $(g_n) \in T$ be a sequence of elements. We must show there is a subsequence $(g_{n_k})$ such that
\[
\bigcap g_{n_k} F g_{n_k}^{-1} \ne \{e\}.
\]
For $x \in [0,1]$, let $T_x < T$ denote the stabilizer subgroup corresponding to $x$. Then $F = T_0$. Let $x_n = g_n(0)$. Then $g_n F g_n^{-1} = T_{x_n}$. Let $(x_{n_k})$ be a convergent subsequence. Then there is a dyadic interval $I$ in $[0,1]$ such that $x_{n_k} \notin I$ for all $k$. Taking $f \in T$ such that $f$ fixes every point in $[0,1] \setminus I$ but does not fix some point in $I$, we see that $f \in T_{x_{n_k}}$ for each $k$. Hence $F$ is residually normal.

By Theorem \ref{thm:c-star-simple-iff-no-amenable-residually-normal-subgroups}, it follows that if $T$ is C*-simple, then $F$ is non-amenable. Thus we obtain another proof of Haagerup and Olesen's result \cite{HO2014} that if $T$ is C*-simple, then $F$ is non-amenable.
\end{example}

%%%%%%%%%%%%%%%%%%%%%%%%%%%%%%%%%%%%%%%%%%
\section{Powers' averaging property} \label{sec:powers-averaging-property}

In this section we will establish the equivalence between C*-simplicity and Powers' averaging property. The first result is an equivalence between C*-simplicity and a kind of ``weak'' averaging property.

\begin{prop} \label{prop:weak-powers-type-condition}
A discrete group $G$ is C*-simple if and only if for every bounded linear functional $\phi$ on the reduced C*-algebra $\ca_{\lambda}(G)$ and every $a \in \ca_\lambda(G)$,
\[
\inf_{b \in K} |\phi(b) - \phi(1)\tau_\lambda(a)| = 0,
\]
where $K$ denotes the norm closed convex hull of $\{\lambda_g a \lambda_{g^{-1}} \mid g \in G\}$.
\end{prop}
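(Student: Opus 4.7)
The plan is to derive this proposition as essentially a dualization of Corollary \ref{cor:closed-convex-hull-contains-trace-dual}, which says that $G$ is C*-simple iff for every $\phi \in \ca_\lambda(G)^*$ the weak*-closed convex hull of $G\phi$ contains $\phi(1)\tau_\lambda$. The bridge between the two formulations is the identity
\[
\Bigl(\sum_i c_i (g_i\phi)\Bigr)(a) \;=\; \phi\!\Bigl(\sum_i c_i \lambda_{g_i}^{-1} a \lambda_{g_i}\Bigr),
\]
which converts an average of translates of $\phi$ into $\phi$ evaluated on an element of $K$, using that the orbit $\{\lambda_g^{-1} a \lambda_g : g \in G\}$ coincides with $\{\lambda_g a \lambda_g^{-1} : g \in G\}$ under $g \mapsto g^{-1}$.

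Given this identity, the forward direction is immediate. Assuming $G$ is C*-simple and fixing $\phi, a$, Corollary \ref{cor:closed-convex-hull-contains-trace-dual} furnishes a net of convex combinations $\psi_i = \sum_{g \in F_i} c_g^{(i)} (g\phi)$ with $\psi_i \to \phi(1)\tau_\lambda$ in the weak* topology. Evaluating at $a$ and applying the identity yields $\phi(b_i) \to \phi(1)\tau_\lambda(a)$ for elements $b_i \in K$, so the infimum is $0$.

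For the converse I would argue by contraposition via Hahn-Banach separation. Suppose $G$ is not C*-simple. Corollary \ref{cor:closed-convex-hull-contains-trace-dual} then provides some $\phi \in \ca_\lambda(G)^*$ such that $\phi(1)\tau_\lambda$ lies outside the weak*-closed convex hull $C$ of $G\phi$. Since the weak*-continuous real-linear functionals on $\ca_\lambda(G)^*$ are precisely of the form $\psi \mapsto \re\psi(a)$ for $a \in \ca_\lambda(G)$, the Hahn-Banach separation theorem yields $a \in \ca_\lambda(G)$ and $\alpha \in \bR$ with
\[
\re\phi(1)\tau_\lambda(a) \;<\; \alpha \;\leq\; \re \psi(a) \qforal \psi \in C.
\]
Specializing to $\psi = g\phi$ and forming convex combinations, the bridge identity gives $\re\phi(b) \geq \alpha$ for every $b$ in the convex hull of $\{\lambda_g a \lambda_g^{-1} : g \in G\}$, and this persists to all $b \in K$ by norm continuity of $\phi$. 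Hence
\[
\inf_{b \in K}|\phi(b) - \phi(1)\tau_\lambda(a)| \;\geq\; \alpha - \re\phi(1)\tau_\lambda(a) \;>\; 0,
\]
contradicting the hypothesis. The whole argument is essentially transparent once Corollary \ref{cor:closed-convex-hull-contains-trace-dual} is granted; the only points meriting care are the index inversion relating conjugation of $\phi$ to conjugation of $a$, and the fact that Hahn-Banach delivers \emph{strict} separation of a point from a weak*-closed convex set (which is what produces the positive lower bound on the infimum).
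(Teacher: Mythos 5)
Your proposal is correct and follows essentially the same route as the paper: the paper also deduces the proposition from Corollary \ref{cor:closed-convex-hull-contains-trace-dual} by Hahn--Banach separation, merely asserting the equivalence in one line where you spell out the bridging identity $(g\phi)(a)=\phi(\lambda_g^{-1}a\lambda_g)$ and the strict separation argument. The only discrepancy is the missing inverse in the statement's set $\{\lambda_g a \lambda_g \mid g\in G\}$, which is a typo in the paper; your reading of it as $\{\lambda_g a \lambda_g^{-1}\}$ is the intended one.
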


\begin{proof}
By Corollary \ref{cor:closed-convex-hull-contains-trace-dual}, $G$ is not C*-simple if and only if there is a bounded linear functional $\phi$ on $\ca_\lambda(G)$ such that $\phi(1) \tau_\lambda$ does not belong to the weak* closed convex hull of $G\phi$. By the Hahn-Banach separation theorem, this is equivalent to the existence of $a \in \ca_\lambda(G)$ such that
\[
\inf_{b \in K} |\phi(b) - \phi(1) \tau_\lambda(a)| > 0,
\]
where $K$ denotes the norm closed convex hull of $\{\lambda_g a \lambda_{g^{-1}} \mid g \in G\}$.
\end{proof}

\begin{defn} \label{defn:powers-averaging-property}
A discrete group $G$ is said to have {\em Powers' averaging property} if for every element $a$ in the reduced C*-algebra $\ca_\lambda(G)$ and $\epsilon > 0$ there are $g_1,\ldots,g_n \in G$ such that
\[
\left\| \frac{1}{n} \sum_{i=1}^n \lambda_{g_i} a \lambda_{g_i}^{-1} - \tau_\lambda(a) 1 \right\| < \epsilon,
\]
where $\tau_\lambda$ denotes the canonical tracial state on $\ca_\lambda(G)$.
\end{defn}

\begin{thm}
A discrete group is C*-simple if and only if it has Powers' averaging property.
\end{thm}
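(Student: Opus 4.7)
The proof rests on Proposition \ref{prop:weak-powers-type-condition}, which already gives a weak form of Powers' averaging in terms of arbitrary bounded linear functionals. The key observation is that functional-by-functional approximation into a \emph{convex} set can be upgraded to norm approximation by a standard Hahn-Banach separation argument.

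I would first dispatch the easy implication. Assume $G$ has Powers' averaging property and let $I \subset \ca_\lambda(G)$ be a nonzero closed two-sided ideal. Pick any nonzero $a \in I$; then $a^*a \in I$ and $\tau_\lambda(a^*a) > 0$ by faithfulness of the canonical trace. Applying Powers' averaging to $a^*a$ with $\epsilon < \tau_\lambda(a^*a)$, the resulting element
\[
\frac{1}{n}\sum_{i=1}^n \lambda_{g_i} a^*a \lambda_{g_i}^{-1}
\]
lies in $I$ (since $I$ is invariant under the inner automorphisms implemented by the $\lambda_{g_i}$) and is within $\epsilon$ in norm of the nonzero scalar $\tau_\lambda(a^*a) \cdot 1$, hence is invertible. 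Thus $I$ contains an invertible element, forcing $I = \ca_\lambda(G)$.

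For the other direction, assume $G$ is C*-simple, fix $a \in \ca_\lambda(G)$, and let $K$ denote the norm-closed convex hull of $\{\lambda_g a \lambda_g^{-1} : g \in G\}$. The goal is to place $\tau_\lambda(a) \cdot 1$ inside $K$. Suppose for contradiction that it is not. Since $K$ is convex and norm-closed, Hahn-Banach separation yields a bounded real-linear functional strictly separating $\tau_\lambda(a) \cdot 1$ from $K$; extending to a complex-linear $\phi \in \ca_\lambda(G)^*$ gives
\[
\inf_{b \in K} |\phi(b) - \phi(1)\tau_\lambda(a)| > 0,
\]
which directly contradicts Proposition \ref{prop:weak-powers-type-condition}. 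Hence $\tau_\lambda(a) \cdot 1 \in K$, so for any $\epsilon > 0$ there is a convex combination $\sum_{i=1}^m t_i \lambda_{g_i} a \lambda_{g_i}^{-1}$ within $\epsilon/2$ of $\tau_\lambda(a) \cdot 1$. To pass to the equal-weight form required by Definition \ref{defn:powers-averaging-property}, approximate the $t_i$ by positive rationals with a common denominator $n$ and repeat each $g_i$ according to its numerator; choosing the approximation fine enough absorbs the remaining $\epsilon/2$.

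The only real obstacle is recognizing that Proposition \ref{prop:weak-powers-type-condition}, applied across \emph{all} bounded linear functionals on $\ca_\lambda(G)$, already encodes exactly the weak-topological content of Powers' averaging; beyond this, the argument reduces to Hahn-Banach separation, faithfulness of $\tau_\lambda$, and a rational-approximation bookkeeping step.
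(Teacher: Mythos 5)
Your proof is correct and takes essentially the same route as the paper: Powers' original ideal argument for the easy direction, and for the converse a Hahn--Banach separation argument played off against Proposition \ref{prop:weak-powers-type-condition} to place $\tau_\lambda(a)1$ in the norm-closed convex hull of $\{\lambda_g a \lambda_g^{-1} : g \in G\}$. Your additional details (invertibility of the averaged element in the ideal, and the rational-weight bookkeeping to reach equal coefficients) merely make explicit steps the paper leaves implicit.
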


\begin{proof}
($\Leftarrow$)
If $G$ has Powers' averaging property, then the following argument of Powers \cite{P1975} implies that $G$ is C*-simple. Let $I$ be a closed two-sided closed ideal of $\ca_\lambda(G)$. By the faithfulness of $\tau_\lambda$, if $I \ne \{0\}$, then there is $a \in I$ such that $\tau_\lambda(a) = 1$. Applying Powers' averaging property shows that $1 \in I$. Hence $I = \ca_\lambda(G)$ and we see that $\ca_\lambda(G)$ has no non-trivial proper closed two-sided ideals.

($\Rightarrow$)
Suppose that $G$ is C*-simple and fix $a \in \ca_\lambda(G)$. Let $K$ denote the norm closed convex hull of $\{\lambda_g a \lambda_g^{-1} \mid g \in G\}$. We must show that $\tau_\lambda(a) 1 \in K$. Suppose to the contrary that $\tau_\lambda(a) 1 \notin K$. Then by the Hahn-Banach separation theorem there is a bounded linear functional $\phi$ on $\ca_\lambda(G)$ such that
\[
\inf_{b \in K} |\phi(b) - \phi(1) \tau_\lambda(a)| > 0.
\]
Since $G$ is C*-simple, this contradicts Proposition \ref{prop:weak-powers-type-condition}.
\end{proof}

%%%%%%%%%%%%%%%%%%%%%%%%%%%%%%%%%%%%%%%%%%

\end{document}